\title{Reciprocal maximum likelihood degrees of diagonal linear concentration models}
\keywords{maximum likelihood degrees, reciprocal spaces, matroids, characteristic polynomials}
\author{Christopher Eur}
\address{%
Department of Mathematics\\
Stanford University\\
\email{chriseur@stanford.edu}
} 
\author{Tara Fife}
\address{%
Max Planck Institute \\ for Mathematics in the Sciences\\
\email{fi.tara@gmail.com}
} 
\author{Jos\'e Alejandro Samper}
\address{%
Departamento de Matem\'aticas \\
Pontificia Universidad Cat\'olica de Chile 
\\
\email{jsamper@mat.uc.cl}
} 
\author{Tim Seynnaeve}
\address{%
Mathematical Institute\\
University of Bern\\
\email{tim.seynnaeve@math.unibe.ch}
} 
\newcommand{\trace}{\operatorname{trace}}
\newcommand{\ZZ}{\mathbb{Z}}
\newcommand{\CC}{\mathbb{C}}
\newcommand{\RR}{\mathbb{R}}
\newcommand{\PP}{\mathbb {P}}
\newcommand{\cL}{\mathcal{L}}
\newcommand{\mld}{\operatorname{mld}}
\newcommand{\rmld}{\operatorname{rmld}}
\newcommand{\supp}{\operatorname{supp}}
\newcommand{\rk}{\operatorname{rk}}
\begin{document}
\maketitle

\begin{abstract}
We show that the reciprocal maximal likelihood degree (rmld) of a diagonal linear concentration model $\mathcal L \subseteq \CC^n$ of dimension $r$ is equal to
$
(-2)^r\chi_M( \textstyle\frac{1}{2}),
$
where $\chi_M$ is the characteristic polynomial of the matroid $M$ associated to $\mathcal L$.
In particular, this establishes the polynomiality of the rmld for general diagonal linear concentration models, positively answering a question of Sturmfels, Timme, and Zwiernik.
\end{abstract}

\section{Introduction}

Let $\mathbb S^n$ be the space of (real or complex) $n\times n$ symmetric matrices, and $\mathbb S^n_{>0}$ the subset consisting of real positive definite symmetric matrices.  For a fixed $S\in \mathbb S^n_{>0}$, the \textbf{log-likelihood function} $\ell_S: \mathbb S^n_{>0} \to \RR$ is defined by
\[
\ell_S(K) := \log\det K - \trace(S\cdot K).
\]
For a subvariety $\cL \subseteq \mathbb S^n$, the \textbf{maximum likelihood (ML) degree} $\mld(\cL)$ is the number of invertible complex critical points of $\ell_S$ on the smooth locus of $\cL$, counted with multiplicity, for a general choice of $S$.  Writing $\cL^{-1} \subseteq \mathbb S^n$ for the subvariety obtained as the closure of $\{K^{-1} \in \mathbb S^n \mid K\in \cL \text{ invertible}\}$, one defines the \textbf{reciprocal maximum likelihood degree} $\rmld(\cL)$ as the number of invertible complex critical points of $\ell_S$ on the smooth loci of $\cL^{-1}$, counted with multiplicity.

\medskip
Computing (reciprocal) ML degrees arises in statistical applications, where $\mathbb S^n_{>0}$ is often considered as the set of concentration matrices of multivariate normal distributions \cite{SU10}.  We caution that the terminology here regarding reciprocal vs.\ non-reciprocal ML degree is the opposite of that in \cite{STZ20, BCEMR20}, where $\mathbb S^n_{>0}$ is considered as the set of \emph{covariance} matrices (inverses of concentration matrices).  In particular, our $\rmld$ is the ML degree of a linear covariance model.  Our convention here agrees with \cite{SU10, MMW20, FMS20, MMMSV20}. 

\medskip
Let $[n] = \{1, \ldots, n\}$.  A \textbf{diagonal linear concentration model} is a linear subspace $\cL \subseteq \CC^{[n]}$, where $\CC^{[n]}$ is identified with the space of diagonal matrices in $\mathbb S^n$.
Let $M$ be the matroid on $[n]$ whose independent subsets are $I\subseteq [n]$ such that the composition $\CC^I \hookrightarrow \CC^{[n]} \twoheadrightarrow \cL^\vee$ is injective.  Without loss of generality, we always assume that $\cL$ is not contained in a coordinate hyperplane, or equivalently, that $M$ is loopless, since otherwise $\rmld(\cL) = 0$ from the definition.   Our main result is the formula for the reciprocal ML degree of $\cL$ in terms of $M$.

\begin{thm}\label{thm:main1}
Let $\cL \subseteq \CC^{[n]}$ be a diagonal linear concentration model of dimension $r$, and $M$ the associated matroid of rank $r$ on $[n]$.  Then we have
\[
\rmld(\cL) = (-2)^r\chi_M( \textstyle\frac{1}{2}),
\]
where $\chi_M$ is the characteristic polynomial of $M$.
\end{thm}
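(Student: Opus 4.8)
The plan is to transport the problem from $\cL^{-1}$ back to the hyperplane arrangement of $M$, to realize the reciprocal critical points as the zeros of a rational $1$-form on the arrangement complement, to count these zeros as a Chern number on a smooth compactification, and finally to evaluate that Chern number combinatorially. I expect the two main obstacles to be a transversality statement at the boundary of the compactification and the closing intersection-theoretic computation.

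\emph{Reduction to the arrangement complement.} The coordinatewise reciprocal $\sigma\mapsto(\sigma_i^{-1})_i$ is a birational involution of $\CC^{[n]}$ restricting to an isomorphism $U:=\cL\cap(\CC^*)^{[n]}\xrightarrow{\,\sim\,}\cL^{-1}\cap(\CC^*)^{[n]}$, and the right-hand side is exactly the invertible locus of $\cL^{-1}$, which is open in $\cL^{-1}$ and smooth. Hence $\rmld(\cL)$ is the number of critical points of $\ell_S$ on this open set, which via the isomorphism is the number of critical points on $U$ of the multivalued function
\[
g_S(\sigma)\ :=\ \ell_S\bigl(\operatorname{diag}(\sigma_i^{-1})\bigr)\ =\ -\sum_{i\in[n]}\log\sigma_i\ -\ \sum_{i\in[n]}\frac{s_i}{\sigma_i},
\]
where $s_i$ denotes the $i$-th diagonal entry of $S$; for generic $S$ these critical points are finite in number and reduced, so this is an honest cardinality. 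Fixing a linear isomorphism $\CC^r\cong\cL$ and writing $\sigma_i=\ell_i(t)$ for linear forms $\ell_i$ on $\CC^r$, the space $U=\CC^r\setminus\bigcup_i\{\ell_i=0\}$ is the complement of the essential central arrangement $\mathcal A_M$ realizing $M$, and we must count the zeros on $U$ of the rational $1$-form $\omega_S=dg_S=-\,d\log(\ell_1\cdots\ell_n)+d\bigl(-\sum_i s_i/\ell_i\bigr)$; equivalently, the critical points are the solutions with all coordinates nonzero of $\sigma\in\cL$, $\bigl((s_i-\sigma_i)/\sigma_i^2\bigr)_i\in\cL^\perp$.

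\emph{Enumeration by a Chern number.} Let $W$ be a smooth proper compactification of $U$ whose boundary $\partial W$ is a simple normal crossings divisor --- e.g.\ a wonderful model of $\mathcal A_M$, or the closure of $U$ in a toric compactification of $(\CC^*)^{[n]}$ adapted to the Bergman fan of $M$. Its boundary divisors are of two kinds: \emph{hyperplane type}, along which some $\ell_i$ acquires a zero, and \emph{at-infinity type}, along which some $\ell_i$ acquires a pole. Extending $\omega_S$ across $\partial W$: the logarithmic term $-d\log(\ell_1\cdots\ell_n)$ has only simple poles everywhere on $\partial W$, while $d(-\sum_i s_i/\ell_i)$ has a pole of order exactly two along each hyperplane-type divisor --- for generic $S$ the leading (order-two) coefficients are nonzero multiples of the $s_i$ and do not cancel --- and no pole along the at-infinity divisors. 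Thus $\omega_S\in H^0\bigl(W,\Omega^1_W(D)\bigr)$, where the divisor $D$ has coefficient $2$ along the hyperplane-type boundary divisors and $1$ along the at-infinity ones; this doubling along the hyperplanes is the geometric origin of the factor $(-2)^r$. The crux here is to show that for generic $S$ the zero scheme of $\omega_S$ is reduced and disjoint from $\partial W$, so that the count receives no boundary correction; I expect this to follow from a dimension count showing that varying $S$ moves the restrictions of $\omega_S$ to the boundary strata of $W$ generically. Granting this,
\[
\rmld(\cL)\ =\ \int_W c_r\bigl(\Omega^1_W(D)\bigr).
\]

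\emph{Evaluation, and an alternative.} Finally one evaluates this Chern number using the combinatorics of $M$. Expanding $c\bigl(\Omega^1_W(D)\bigr)$ in terms of $c\bigl(\Omega^1_W(\log\partial W)\bigr)$ and the boundary classes, and using the Feichtner--Yuzvinsky presentation of $A^\bullet(W)$ together with the identification of the relevant monomials in the boundary classes with the Whitney numbers of $M$ (in the spirit of Huh--Katz and Adiprasito--Huh--Katz), the ``logarithmic part'' of the integral contributes $(-1)^r\chi(U)$, which vanishes --- reflecting the $\CC^*$-scaling symmetry of the central arrangement --- while the coefficient $2$ along the hyperplane divisors upgrades the output to $(-1)^r\pi_M(-2)$, where $\pi_M$ is the Poincar\'e polynomial of the arrangement complement; since $\pi_M(-2)=2^r\chi_M(\tfrac12)$, this equals $(-2)^r\chi_M(\tfrac12)$. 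As an alternative bypassing the intersection theory, one may argue by deletion--contraction directly on $\rmld$: writing $\bar\cL$ for the image of $\cL$ under deleting the $e$-th coordinate and $\cL/e:=\cL\cap\{x_e=0\}$ (both again diagonal linear concentration models, realizing $M\setminus e$ and $M/e$), one shows $\rmld(\cL)=\rmld(\bar\cL)+2\,\rmld(\cL/e)$ whenever $e$ is neither a loop nor a coloop of $M$, that $\rmld$ is unchanged by deleting a coloop, and that $\rmld(\CC^{[n]})=1$; as $(-2)^{\rk M}\chi_M(\tfrac12)$ obeys the same recursion and base cases, the theorem follows. On this route the main difficulty is again establishing the factor $2$, which reflects the order-two pole of $\sigma_e^{-1}$ --- equivalently, the two roots of the quadratic $\sigma_e^2 w_e+\sigma_e-s_e=0$ (with $w=(w_i)_i\in\cL^\perp$) cut out by the critical system above.
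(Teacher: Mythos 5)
Your reduction to counting the zeros on $U=\cL\cap(\CC^*)^{[n]}$ of the $1$-form $\omega_S$ is correct (it is Proposition~\ref{prop:scoreprep}), but the central claim of your main route --- that for generic $S$ the zero scheme of $\omega_S$, viewed as a section of $\Omega^1_W(D)$, is disjoint from $\partial W$ --- is false, and not for lack of genericity: it fails structurally. The twist by $\mathcal O(D)$ is not ``direction-aware,'' so along a boundary divisor $E$ every component of $\omega_S$ whose polar divisor does not contain $E$ acquires a forced zero of order equal to the coefficient of $E$ in $D$, and these forced zero loci meet at every crossing of boundary divisors. Concretely, take $n=r=2$, $\cL=\CC^{[2]}$ (so $M=U_{2,2}$ and $\rmld(\cL)=1=(-2)^2\chi_M(\frac{1}{2})$), and $W=\PP^1\times\PP^1$, a legitimate toric compactification adapted to the Bergman fan, with $D=2(\{x_1=0\}+\{x_2=0\})+\{x_1=\infty\}+\{x_2=\infty\}$. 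Near the origin, in the frame $\frac{dx_1}{x_1^2x_2^2},\frac{dx_2}{x_1^2x_2^2}$ of $\Omega^1_W(D)$, one has $\omega_S=(s_1-x_1)x_2^2\cdot\frac{dx_1}{x_1^2x_2^2}+(s_2-x_2)x_1^2\cdot\frac{dx_2}{x_1^2x_2^2}$, which vanishes at $(0,0)$ with multiplicity $4$ for \emph{every} $S$; similar forced zeros of multiplicities $2,2,1$ sit at the other three torus-fixed points, and indeed $\int_W c_2(\Omega^1_W(D))=(F_1+3F_2)\cdot(3F_1+F_2)=10\neq 1$, where $F_1,F_2$ are the two ruling classes. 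So the identity $\rmld(\cL)=\int_W c_r(\Omega^1_W(D))$ is false, and the subsequent evaluation of that Chern number as $(-1)^r$ times the Poincar\'e polynomial at $-2$ cannot be correct either; nor can a cleverer SNC compactification help for $r\geq 2$, since the closure of a hyperplane-type stratum must itself meet further boundary divisors. The excess boundary terms you are missing are exactly what the paper computes: it works with $\cL^{-1}\cap Y(\cL,s,2)$, whose total degree is $2^r|\mu(M)|$ (Proposition~\ref{prop:total}), identifies the contribution of each flat $F$ as $\mathcal D(\cL_{|F},2)\cdot|\mu(M/F)|$ via tangent cones of $\cL^{-1}$ (Proposition~\ref{prop:eachstrata}), and strips the boundary off by M\"obius inversion; see also Remark~\ref{rem:MLdeg}, which stresses that for $d\geq1$ the relevant intersection \emph{never} lies entirely in the torus.

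Your alternative route is consistent at the level of matroid invariants --- $(-2)^{\rk M}\chi_M(\frac{1}{2})$ does satisfy $f(M)=f(M\setminus e)+2f(M/e)$ for $e$ neither a loop nor a coloop, with the coloop rule and Boolean base case you state --- but you do not prove the recursion for $\rmld$ itself; the factor $2$ is only identified heuristically with the two roots of a quadratic. Establishing $\rmld(\cL)=\rmld(\cL_{\setminus e})+2\,\rmld(\cL_{/e})$ directly is precisely the hard content (a generic critical point does not literally specialize into one for the deletion plus two for the contraction without a degeneration argument tracking multiplicities), and the paper obtains this relation only as a \emph{consequence} of Theorem~\ref{thm:main1} via the Tutte recipe theorem, explicitly posing a direct geometric proof of it as an open problem. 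As written, both of your routes are missing the same ingredient --- an honest accounting of the degenerate/boundary solutions --- which is what Sections~\ref{sec:keylemmas}--\ref{sec:comb} of the paper supply.
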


In \cite{SU10,STZ20}, the (non-reciprocal) ML degree of $\cL$ was shown to be $|\chi_{M}(0)|$.  Computing the reciprocal ML degree presents fundamentally new challenges; see Remark~\ref{rem:MLdeg} for a comparison.

\medskip
From computational experiments, the authors of \cite{STZ20} asked whether the reciprocal ML degree of a \emph{general} diagonal linear concentration model of dimension $r$ in $\CC^{[n]}$ is a polynomial in $n$ of degree $r-1$.
Evaluating our Theorem~\ref{thm:main1} at uniform matroids answers their question positively.

\begin{cor}\label{cor:uniform}
Let $\cL \subseteq \CC^{[n]}$ be a general linear concentration model of dimension $r$.
Then we have
\[
\rmld(\cL) =  \sum_{i=1}^{r}  \textstyle \binom{n-i-1}{r-i}2^{r-i}.
\]
\end{cor}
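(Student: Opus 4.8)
The plan is to deduce this directly from Theorem~\ref{thm:main1} by specializing to the uniform matroid and then unwinding the resulting combinatorial identity. First I would observe that for a general $r$-dimensional $\cL \subseteq \CC^{[n]}$ the associated matroid is the uniform matroid $M = U_{r,n}$ of rank $r$ on $[n]$: by definition the independent sets of $M$ are the $I$ with $\CC^I \cap \cL^\perp = 0$, and when $\cL$ is generic the subspace $\cL^\perp$ is a generic subspace of codimension $r$, so $\CC^I$ meets it trivially exactly when $|I| \le r$. Theorem~\ref{thm:main1} then gives $\rmld(\cL) = (-2)^r\chi_{U_{r,n}}(\frac{1}{2})$, so it remains to prove the purely combinatorial identity
\[
(-2)^r\chi_{U_{r,n}}(\tfrac{1}{2}) = \sum_{i=1}^r \binom{n-i-1}{r-i}2^{r-i} .
\]

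The cleanest route I see is induction via deletion--contraction. For $n > r \ge 1$ any $e \in [n]$ is neither a loop nor a coloop of $U_{r,n}$, so $\chi_{U_{r,n}}(t) = \chi_{U_{r,n-1}}(t) - \chi_{U_{r-1,n-1}}(t)$; together with $\chi_{U_{r,r}}(t) = (t-1)^r$ and $\chi_{U_{0,m}}(t) = 0$ for $m \ge 1$ this pins down all the characteristic polynomials in play. Writing $g(r,n) := (-2)^r\chi_{U_{r,n}}(\frac{1}{2})$ and using $(-2)^r = -2\cdot(-2)^{r-1}$, the deletion--contraction relation becomes
\[
g(r,n) = g(r,n-1) + 2\,g(r-1,n-1) \qquad (n > r \ge 1),
\]
with $g(r,r) = (-2)^r(-\frac{1}{2})^r = 1$ and $g(0,m) = 0$ for $m \ge 1$. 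I would then check that $f(r,n) := \sum_{i=1}^r\binom{n-i-1}{r-i}2^{r-i}$ obeys the same recursion and the same initial conditions: $f(r,r) = 1$ because $\binom{r-i-1}{r-i}$ vanishes unless $i = r$, $f(0,m) = 0$ since the sum is empty, and $f(r,n-1) + 2f(r-1,n-1) = f(r,n)$ follows from Pascal's rule $\binom{n-i-2}{r-i} + \binom{n-i-2}{r-i-1} = \binom{n-i-1}{r-i}$ once the powers of $2$ are matched up. An induction on $n$, with an outer induction on $r$, then gives $g = f$, which is the claim.

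Alternatively one can bypass the recursion entirely using the broken-circuit expansion $\chi_{U_{r,n}}(t) = \sum_{k=0}^{r-1}(-1)^k\binom{n}{k}t^{r-k} + (-1)^r\binom{n-1}{r-1}$ --- valid because every $k$-subset with $k < r$ is a flat, so (for the natural order on $[n]$) the non-broken-circuit sets are precisely the subsets of size $\le r-1$ together with the $r$-subsets containing $1$ --- and substituting $t = \frac{1}{2}$ into $(-2)^r\chi_{U_{r,n}}(\frac12)$, after which the target identity becomes a finite binomial identity to be verified by the same Pascal manipulation. Either way the argument is routine: the only mild obstacle is keeping the index shifts consistent in the Pascal step, and all of the real content sits in Theorem~\ref{thm:main1}.
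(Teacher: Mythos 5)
Your proposal is correct and takes essentially the same route as the paper: both specialize the main theorem to the uniform matroid $U_{r,n}$ (the paper via Theorem~\ref{thm:main2} at $d=2$, you via the equivalent evaluation $(-2)^r\chi_{U_{r,n}}(\tfrac{1}{2})$ from Theorem~\ref{thm:main1}). The only difference is bookkeeping: the paper quotes the known closed form of $T_{U_{r,n}}(x,y)$ and substitutes $(1-\tfrac{1}{2},0)$, whereas you verify the resulting binomial identity yourself by deletion--contraction (or the broken-circuit expansion), which is a routine and valid substitute.
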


For instance, when $r = 3$ we have $2n^2 - 8n + 7$, and when $r = 4$ we have $4/3n^3 - 10n^2 + 68/3n - 15$, as predicted in \cite{STZ20} from numerical computations.

\medskip
To prove Theorem~\ref{thm:main1}, we use the following alternate description of the reciprocal ML degree, obtained by a standard computation in multivariable calculus.  Let $\cL^\perp$ denote the orthogonal complement of a subspace $\cL \subseteq \CC^{[n]}$ under the standard pairing
\[
\langle (x_1, \ldots, x_n), (y_1, \ldots, y_n) \rangle := \sum_{i=1}^n x_i y_i.
\]

\begin{prop}\label{prop:scoreprep}\cite[Proposition 4.3]{STZ20}
The reciprocal ML degree of a linear subspace $\cL \subseteq \CC^{[n]}$ is equal to the number of solutions $(x_1,\ldots,x_n) \in (\CC^*)^{[n]}$, counted with multiplicity, to the following system of equations, where $s_1,\ldots,s_n \in \CC$ are generic parameters:
\[
	(x_1^{-1},\ldots,x_n^{-1}) \in \cL \quad\text{and}\quad
	(s_1x_1^2-x_1,\ldots,s_nx_n^2-x_n) \in \cL^{\perp}.
\]
\end{prop}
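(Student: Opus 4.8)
The plan is to reduce the statement to an elementary Lagrange-multiplier computation, using crucially that $\cL^{-1}$ consists of diagonal matrices. First I would restrict the log-likelihood to the diagonal. Writing $K = \operatorname{diag}(k_1,\ldots,k_n)$ and $s_i := S_{ii}$, the identities $\det K = \prod_i k_i$ and $\trace(SK) = \sum_i S_{ii}k_i$ (the latter because $K$ is diagonal) give
$\ell_S(K) = \sum_{i=1}^n(\log k_i - s_i k_i)$.
The key structural observation is that $\ell_S|_{\cL^{-1}}$ depends only on the diagonal entries $s_1,\ldots,s_n$ of $S$, and that for a general positive-definite $S$ these form a generic point of $\CC^{[n]}$. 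Hence the critical-point count is the same whether we read $s_i = S_{ii}$ or treat $s_1,\ldots,s_n$ as independent generic parameters.

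Next I would describe $\cL^{-1}$ and its tangent spaces. Coordinate-wise inversion $\iota\colon (\CC^*)^{[n]} \to (\CC^*)^{[n]}$, $(y_i)\mapsto(y_i^{-1})$, is an isomorphism restricting to $\cL \cap (\CC^*)^{[n]} \xrightarrow{\sim} \cL^{-1}\cap(\CC^*)^{[n]}$. In particular every $K = \operatorname{diag}(x_1,\ldots,x_n) \in \cL^{-1}$ with all $x_i \neq 0$ is a smooth point, the membership condition being exactly $(x_1^{-1},\ldots,x_n^{-1}) \in \cL$. Differentiating $\iota$ at the point $(x_i^{-1}) \in \cL$ shows
$T_K\cL^{-1} = \{(-x_i^2 v_i)_i : (v_i) \in \cL\}$,
the image of $\cL$ under coordinate-wise scaling by $-x_i^2$.

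Then I would impose the critical-point condition as orthogonality to this tangent space. The gradient of the restricted log-likelihood is $\nabla\ell_S(K) = (x_1^{-1}-s_1,\ldots,x_n^{-1}-s_n)$, and $K$ is a critical point of $\ell_S|_{\cL^{-1}}$ precisely when $\nabla\ell_S(K)$ is orthogonal to $T_K\cL^{-1}$ under the standard pairing. Since $\langle \nabla\ell_S(K), (-x_i^2 v_i)_i \rangle = -\langle (x_i - s_i x_i^2)_i, (v_i)_i\rangle$, this vanishes for all $(v_i)\in\cL$ if and only if $(x_i - s_i x_i^2)_i \in \cL^\perp$, equivalently $(s_i x_i^2 - x_i)_i \in \cL^\perp$. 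Together with the membership constraint this is exactly the displayed system.

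Finally I would match counts with multiplicity. Because $\iota$ is an isomorphism of smooth varieties and the invertible critical points of $\ell_S|_{\cL^{-1}}$ all lie in the torus $(\CC^*)^{[n]}$, the critical points correspond bijectively, preserving local multiplicities, to the solutions $(x_1,\ldots,x_n)\in(\CC^*)^{[n]}$ of the system. The step requiring genuine care is the genericity and multiplicity bookkeeping: one must verify that, for general $S$, all counted critical points are invertible and lie on the smooth locus, so that none are lost on the boundary $\prod_i x_i = 0$ or at singular points of $\cL^{-1}$, and that the generic count over positive-definite $S$ coincides with the generic count over independent parameters $s_i$. I expect this transversality argument, rather than the calculus itself, to be the only real obstacle.
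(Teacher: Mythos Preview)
The paper does not supply its own proof of this proposition: it is quoted from \cite[Proposition~4.3]{STZ20} with the remark that it is ``obtained by a standard computation in multivariable calculus.'' Your proposal is precisely that standard computation, and the calculus is carried out correctly: restricting $\ell_S$ to diagonals, parametrizing $\cL^{-1}\cap(\CC^*)^{[n]}$ via coordinatewise inversion, computing the tangent space as $\{(-x_i^2 v_i):(v_i)\in\cL\}$, and pairing with the gradient $(x_i^{-1}-s_i)$ to obtain the orthogonality condition $(s_ix_i^2-x_i)\in\cL^\perp$.

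The one point you flag as needing care --- matching genericity in $S\in\mathbb S^n_{>0}$ with genericity in the parameters $s_i\in\CC$ --- is indeed the only subtlety, and it is handled by a Zariski-density argument: the solution count is constant on a Zariski-open subset of parameter space, and $\RR_{>0}^n$ (the diagonals of positive-definite $S$) is Zariski-dense in $\CC^n$. Note also that every invertible point of $\cL^{-1}$ automatically lies in the smooth locus (since $\cL^{-1}\cap(\CC^*)^{[n]}\simeq\cL\cap(\CC^*)^{[n]}$), so no critical points are lost to singularities; this removes one of the worries in your final paragraph.
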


Thus, we prove Theorem~\ref{thm:main1} by establishing the following generalization.

\begin{thm}\label{thm:main2}
For an $r$-dimensional linear subspace $\cL\subseteq \CC^{[n]}$, a generic choice of parameters $s_1, \ldots, s_n \in \CC$, and any integer $d\geq 1$, the number of solutions $(x_1, \ldots, x_n) \in (\CC^*)^{[n]}$, counted with multiplicity, to the system of equations
\begin{equation}\label{eq:score}\tag{$\dagger$}
(x_1^{-1},\ldots,x_n^{-1}) \in \cL \quad\text{and}\quad (s_1x_1^d-x_1,\ldots,s_nx_n^d-x_n) \in \cL^{\perp}
\end{equation}
is equal to
\[
(-d)^r\chi_M( \textstyle\frac{1}{d}),\quad \text{or equivalently,} \quad d^r T_M(1- \frac{1}{d},0),
\]
where $\chi_M$ is the characteristic polynomial and $T_M$ is the Tutte polynomial of the matroid $M$ associated to $\cL$.
\end{thm}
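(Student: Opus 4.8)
The plan is to show that the number $N_d(\cL)$ of solutions of~\eqref{eq:score} equals $d^rT_M(1-\tfrac1d,0)$ by induction on $n=|E(M)|$, matching the deletion--contraction recursion for the Tutte polynomial; the equivalent form $(-d)^r\chi_M(\tfrac1d)$ then follows from Whitney's theorem, and Theorem~\ref{thm:main1} is the case $d=2$ of Proposition~\ref{prop:scoreprep}. For $e\in[n]$ write $\pi_e$ for the coordinate projection forgetting $e$, and set $\cL\setminus e:=\pi_e(\cL)$ (matroid $M\setminus e$) and $\cL/e:=\pi_e(\cL\cap\{x_e=0\})$ (matroid $M/e$), noting $\cL/e\subseteq\cL\setminus e$ and $(\cL/e)^\perp=\pi_e(\cL^\perp)$ inside $\CC^{[n]\setminus e}$. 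I would first dispatch the easy cases: $n=0$ gives $N_d=1$; if $e$ is a loop then $\cL\subseteq\{x_e=0\}$, forcing $N_d=0$; and if $e$ is a coloop, i.e.\ $e_e\in\cL$ (equivalently $\cL^\perp\subseteq\{x_e=0\}$), then the $e$-th slot of the second condition in~\eqref{eq:score} becomes $s_ex_e^d=x_e$, contributing $d-1$ simple values of $x_e$, while the rest is exactly~\eqref{eq:score} for $\cL/e=\cL\setminus e$, so $N_d(\cL)=(d-1)N_d(\cL/e)$ --- matching $d^rT_M=(d-1)\,d^{r-1}T_{M/e}$.

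The heart is the case where $e$ is neither a loop nor a coloop, where I want $N_d(\cL)=d\cdot N_d(\cL/e)+N_d(\cL\setminus e)$. Since $\pi_e$ restricts to isomorphisms $\cL\to\cL\setminus e$ and $\cL^\perp\to(\cL/e)^\perp$, I would eliminate $x_e=1/\mu\big((x_i^{-1})_{i\ne e}\big)$, where $\mu$ is the functional recording the $e$-coordinate of the lift to $\cL$; letting $\nu$ record the $e$-coordinate of the lift to $\cL^\perp$, the system~\eqref{eq:score} for $\cL$ becomes, in variables $(x_i)_{i\ne e}\in(\CC^*)^{[n]\setminus e}$: (I)~$(x_i^{-1})_{i\ne e}\in\cL\setminus e$; (II)~$(s_ix_i^d-x_i)_{i\ne e}\in(\cL/e)^\perp$; and one scalar equation $f=0$ with $f:=(s_ex_e^d-x_e)-\nu\big((s_ix_i^d-x_i)_{i\ne e}\big)$. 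Conditions (I)--(II) cut out a curve $\mathcal C$ (smooth for generic $s$), and $N_d(\cL)$ is the number of zeros of $f$ on $\mathcal C$; I would count these from $\deg(\operatorname{div}f)=0$ on the smooth completion $\overline{\mathcal C}$. The function $x_e=1/\mu\big((x_i^{-1})_{i\ne e}\big)$ has a pole on $\mathcal C$ exactly where $\mu$ vanishes, i.e.\ where $(x_i^{-1})_{i\ne e}\in\cL/e$; by construction these are precisely the $N_d(\cL/e)$ solutions of~\eqref{eq:score} for $\cL/e$, and since there $f\sim s_ex_e^d$ with $x_e$ a simple pole (generic $s$), each contributes a pole of order $d$, for a total of $d\cdot N_d(\cL/e)$. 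Away from these $f$ is regular, and for generic $s$ it has no zeros on the boundary $\overline{\mathcal C}\setminus\mathcal C$; its remaining poles lie on that boundary, arising when $\nu\big((s_ix_i^d-x_i)_{i\ne e}\big)\to\infty$ along the boundary strata of the reciprocal linear space $(\cL\setminus e)^{-1}$, and the claim is that these account for exactly $N_d(\cL\setminus e)$ with multiplicity. Granting this, $\deg(\operatorname{div}f)=0$ gives the recursion.

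The hard part will be this last claim: pinning down the polar divisor of $f$ at the boundary of $\overline{\mathcal C}$ --- that the boundary poles match the deletion count $N_d(\cL\setminus e)$ with the correct orders, and that $f$ has no boundary zeros. This is a genericity statement whose natural home is the tropical (wonderful) compactification of $(\cL\setminus e)^{-1}\cap(\CC^*)^{[n]\setminus e}$, whose boundary strata are indexed by flats/circuits of $M\setminus e$; reconciling the degree-$d$ behavior of $f$ with this stratification is where the combinatorics of $M$ must re-enter, and I expect it to require real work. A cleaner-looking alternative I would also consider is to bypass the curve reduction entirely: interpret $N_d(\cL)$ as the top intersection number of the $r$ degree-$d$ divisors $\{\sum_ia_{ji}(s_ix_i^d-x_i)=0\}$ on a wonderful compactification $Y$ of $\cL^{-1}\cap(\CC^*)^{[n]}$ --- genericity of $s$ keeping the intersection inside the open torus --- and then evaluate this number using that $A^\bullet(Y)$ is the Chow ring of $M$, reducing to a combinatorial identity that produces $(-d)^r\chi_M(\tfrac1d)$.
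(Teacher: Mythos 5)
Your strategy (induction via deletion--contraction) is not the paper's: the paper computes the total degree of $\cL^{-1}\cap Y(\cL,s,d)$ in $\CC^{[n]}$ as $d^r|\mu(M)|$ by a B\'ezout-type argument using Cohen--Macaulayness of $\cL^{-1}$, then evaluates the contribution of each stratum with support a flat $F$ as $\mathcal D(\cL_{|F},d)\,|\mu(M/F)|$ via tangent cones, and finishes by M\"obius inversion. Your plan, if completed, would give exactly the ``direct geometric deletion--contraction proof'' that the paper's final remark explicitly lists as an open, interesting problem --- which is already a warning sign that the step you defer is the whole difficulty. And indeed the proposal has a genuine gap precisely there. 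Your interior accounting is fine in outline: the loop and coloop cases work, and at the interior points of $\mathcal C$ where $\mu=0$ (which are the $N_d(\cL/e)$ contraction solutions) the function $f$ plausibly has poles of order $d$ (though even this needs a genericity argument that $\mu$ cuts $\mathcal C$ transversally at those points, not supplied). But the deletion contribution is only asserted. Note that the $N_d(\cL\setminus e)$ solutions of the deleted system are \emph{interior} points of $\mathcal C$, namely those where $\nu\big((s_ix_i^d-x_i)_{i\neq e}\big)=0$ (since $(\cL\setminus e)^\perp=\{w\in(\cL/e)^\perp \mid \nu(w)=0\}$); at such points $f$ is regular and generically nonvanishing, so there is no direct identification between these solutions and boundary poles of $f$. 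The claim that the polar divisor of $f$ on $\overline{\mathcal C}\setminus\mathcal C$ has total degree exactly $N_d(\cL\setminus e)$, and that $f$ has no boundary zeros, is therefore not a routine genericity statement: verifying it requires controlling the closure of $\mathcal C$ in a compactification whose boundary is stratified by the flats of $M\setminus e$ and computing the orders of $\nu\big((s_ix_i^d-x_i)\big)$ along those strata --- an analysis at least as involved as the paper's own stratum-by-stratum multiplicity computation (Theorem~\ref{thm:reclinsp}\ref{strata},\ref{TC} plus the tangent-cone lemma), and nothing in the proposal indicates how to carry it out.

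Your fallback sketch does not close the gap either. Interpreting $N_d(\cL)$ as a top intersection number of the $r$ score divisors on a wonderful compactification of $\cL^{-1}\cap(\CC^*)^{[n]}$ requires either that the intersection stays in the open part for generic $s$, or an explicit correction for boundary contributions. The first option is exactly what the paper warns against: for $d\geq 1$ the intersection $\cL^{-1}\cap Y(\cL,s,d)$ \emph{never} lies entirely in $(\CC^*)^{[n]}$ (Remark~\ref{rem:MLdeg}); the solutions supported on proper flats are unavoidable, and accounting for them (this is where $|\mu(M/F)|$ and the M\"obius inversion enter in the paper) is again the substantive content. So as it stands the proposal is a plausible program with its central step --- the deletion count, whether as boundary pole order or as a boundary correction in the Chow ring --- left unproved.
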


\begin{rem}
For an $r$-dimensional subspace $\cL \subseteq \CC^{[n]}$, let $U(\cL) := \cL \cap (\CC^*)^{[n]}$ be the hyperplane arrangement complement, and $M$ the associated matroid.  Then the Poincar\'e polynomial of $U(\cL)$
\[
P_{U(\cL)}(q) := \sum_{i\geq 0} \big(\operatorname{rank} H_i(U(\cL);\ZZ)\big)q^i
\]
coincides with polynomial $(-q)^r\chi_M(-\frac{1}{q})$ \cite{OS80}.  In particular, Theorem~\ref{thm:main1} states that $\rmld(\cL) = (-1)^rP_{U(\cL)}(-2)$.  This echoes the result of \cite{Huh13}, which showed that, for a different log-likelihood function (from discrete statistical models), the ML degree of a smooth very affine variety $U$ is its signed topological Euler characteristic $(-1)^{\dim U}P_U(-1)$.  However, ML degrees in our case are not topological invariants of very affine varieties: Observe that $\cL^{-1}\cap (\CC^*)^{[n]} \simeq \cL\cap (\CC^*)^{[n]}$ but in general $\rmld(\cL^{-1}) = \mld(\cL) = \chi_M(0) \neq (-2)^r\chi_M(\frac{1}{2}) = \rmld(\cL)$.
It may still be interesting to find other families of subvarieties $\cL\subseteq \mathbb S^n$ such that $\rmld(\cL) = (-1)^{\dim \cL}P_{U(\cL)}(-2)$, where $U(\cL) := \{K\in \cL \mid K \text{ invertible}\}$.  For example, general pencils of conics form one such family \cite{CMR20, FMS20}.
\end{rem}

\noindent\textbf{Outline.}
In Section~\ref{sec:reclinsp} we review properties of reciprocal linear spaces $\cL^{-1}$, and introduce score varieties, which together with $\cL^{-1}$ encode the system of equations \eqref{eq:score}.  After establishing two key technical lemmas in Section~\ref{sec:keylemmas}, in Section~\ref{sec:total} we compute the number of solutions to the system of equations \eqref{eq:score} in $\CC^{[n]}$, instead of in $(\CC^*)^{[n]}$, in two different ways: One is a B\'ezout-like computation, and the other is a summation, with each summand corresponding to a set of solutions with specified support (non-zero coordinates).  An inclusion-exclusion argument in Section~\ref{sec:comb} then yields the proof of Theorem~\ref{thm:main2}.

\subsection*{Notation}
For an affine subvariety $X \subseteq \CC^n$, we write $\overline X \subseteq \PP^n$ for its projective closure.  If $X\subseteq \CC^n$ is defined by a homogeneous ideal, then we write $\PP X \subseteq \PP^{n-1}$ for its projectivization.  For a point $p\in X\subseteq \CC^{[n]}$, we write $TC_pX \subseteq \CC^{[n]}$ for the tangent cone of $X$ at $p$.  For $p = (p_1, \ldots, p_n) \in \CC^{[n]}$, we write $\supp(p) := \{i\in [n] \mid p_i \neq 0\}$ for its support, and for $I \subseteq [n]$, write $p_{|I}$ for the projection of $p$ onto $\CC^I \subseteq \CC^{[n]}$.

\section{Reciprocal linear spaces and score varieties}\label{sec:reclinsp}

We set notations concerning matroids associated to linear subspaces, and review necessary facts about reciprocal linear spaces.  We assume familiarity with matroid theory, and refer to \cite{Wel76, Oxl11} as standard references.

\medskip
Let us fix a linear subspace $\cL \subseteq \CC^{[n]}$ of dimension $r$.  Let $A$ be an $r\times n$ matrix whose row-span equals $\cL$.
We will often use the fact that the minimal sets among supports of elements in the row-span of $A$ form the cocircuits of $M$.

\medskip
For a subset $I\subseteq [n]$, let $\cL_{|I} \subseteq \CC^I$ be the image of $\cL$ under the coordinate projection $\CC^{[n]} \twoheadrightarrow \CC^I$, and let $\cL_{/I}$ be the intersection of $\cL$ with the coordinate subspace $\{0\}^I \times \CC^{[n]\setminus I}$, considered as a subspace of $ \CC^{[n]\setminus I}$.  The matroid of $\cL_{|I}$ is the restriction $M|I$, 
whereas the matroid of $\cL_{/I}$ is the contraction $M/I$.

\medskip
The \textbf{reciprocal linear space} $\cL^{-1}$ of $\cL$ is the Zariski closure in $\CC^{[n]}$ of $\{(x_1, \ldots, x_n) \in (\CC^*)^{[n]} \mid (x_1^{-1}, \ldots, x_n^{-1})\in \cL\}$.  Note that $\cL^{-1} \cap (\CC^*)^{[n]}$ is smooth, being isomorphic to $\cL \cap (\CC^*)^{[n]}$.  For $I\subseteq [n]$, we write $\cL^{-1}_{|I}$ for $(\cL_{|I})^{-1}$, and likewise write $\cL^{-1}_{/I} = (\cL_{/I})^{-1}$. We collect together in the following theorem the known properties of $\cL^{-1}$ that we will need.

\begin{thm}\label{thm:reclinsp}
Let $\cL^{-1} \subseteq \CC^{[n]}$ be the reciprocal linear space of $\cL \subseteq \CC^{[n]}$.  
\begin{enumerate}[label=(\alph*)]
\item \label{deg} \cite[Lemma 2]{PS06} The ideal of $\cL^{-1}$ is homogeneous, and $\PP \cL^{-1}$ has degree $|\mu(M)|$, where $\mu(M) := \chi_M(0)$ is the M\"obius invariant of the matroid $M$.
\item \label{CM} \cite[Proposition 7]{PS06} The ideal of $\cL^{-1}$ is Cohen-Macaulay, with any basis of $\cL^\perp$ forming a system of parameters, i.e.\ $\cL^{-1} \cap \cL^\perp = \{\mathbf 0\}$.
\item \label{strata} \cite[Proposition 5]{PS06} The intersection $\cL^{-1} \cap ((\CC^*)^F \times \{0\}^{[n]\setminus F})$ is nonempty if and only if $F\subseteq [n]$ is a flat of $M$, and in that case, one has
\[
\cL^{-1} \cap ((\CC^*)^F \times \{0\}^{[n]\setminus F}) = (\cL_{|F}^{-1} \cap (\CC^*)^F) \times \{0\}^{[n]\setminus F}.
\]
\item \label{TC} \cite[Theorem 24]{SSV13} For a flat $F\subseteq [n]$ %
and a point $p\in \cL^{-1}$ with $\supp(p) = F$,
the tangent cone of $\cL^{-1}$ at $p$ is the product
\[
TC_p\cL^{-1} = TC_{p_{|F}}\cL_{|F}^{-1} \times \cL_{/F}^{-1} \simeq \cL_{|F} \times \cL_{/F}^{-1}.
\]
\end{enumerate}
\end{thm}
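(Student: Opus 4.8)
The four statements are classical, so my plan is to reconstruct their proofs from two sources of structure: a single global Gröbner degeneration of $\cL^{-1}$, which handles the ideal-theoretic parts \ref{deg} and \ref{CM}, and a local analysis of $\cL^{-1}$ along its coordinate strata, which handles the geometric parts \ref{strata} and \ref{TC}. For \ref{deg}, homogeneity is immediate: coordinatewise inversion $\mathrm{crem}\colon x\mapsto(x_i^{-1})$ satisfies $\mathrm{crem}(\lambda x)=\lambda^{-1}\mathrm{crem}(x)$, and since $\cL$ is closed under scaling, $\cL^{-1}\cap(\CC^*)^{[n]}$ is stable under the $\CC^*$-action; taking closures shows the ideal of $\cL^{-1}$ is homogeneous. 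For the degree, I would fix a generic term order and identify the initial ideal of $\cL^{-1}$ with the Stanley--Reisner ideal of the broken-circuit complex of $M$. Since degree is constant along a flat Gröbner degeneration and the Stanley--Reisner scheme of a pure complex has degree equal to its number of facets, $\deg\PP\cL^{-1}$ equals the number of no-broken-circuit bases of $M$, which by Whitney's theorem is $|\chi_M(0)|=|\mu(M)|$.

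Part \ref{CM} comes largely for free from the same degeneration: the broken-circuit complex is shellable, hence its Stanley--Reisner ring is Cohen--Macaulay, and Cohen--Macaulayness passes from $S/\mathrm{in}(I)$ back to $S/I(\cL^{-1})$. Because the ring is then Cohen--Macaulay, any homogeneous system of parameters is automatically a regular sequence, so it only remains to produce one; here $\cL^\perp$, which has codimension $r=\dim\cL^{-1}$, is the natural candidate, and the claim reduces to $\cL^{-1}\cap\cL^\perp=\{\mathbf 0\}$. I would prove this directly via the pairing rather than through the degeneration. If $x\in(\CC^*)^{[n]}$ lies in both, then $x^{-1}\in\cL$ and $x\in\cL^\perp$ force $0=\langle x,x^{-1}\rangle=|\supp(x)|>0$, a contradiction. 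A boundary point $p\in\cL^{-1}$ with support $F$ is handled by the same computation after restricting to $\CC^F$: one has $p_{|F}\in\cL_{|F}^{-1}\cap(\CC^*)^F$ (this holds for any point of the closure, since coordinate projection commutes with $\mathrm{crem}$), and $p\in\cL^\perp$ gives $p_{|F}\perp\cL_{|F}$, so $\langle p_{|F},(p_{|F})^{-1}\rangle$ is both $0$ and $|F|>0$.

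Parts \ref{strata} and \ref{TC} require understanding $\cL^{-1}$ near the coordinate boundary, and I expect this to be the main obstacle, since it is where genuine geometry rather than formal algebra enters. For \ref{strata}, I would realize the points of $\cL^{-1}\cap T_F$, where $T_F:=(\CC^*)^F\times\{0\}^{[n]\setminus F}$, as limits $y(t)^{-1}$ of reciprocals of curves $y(t)\in\cL\cap(\CC^*)^{[n]}$. Consistency of the pole and zero orders along such a curve — finite nonzero limits on $F$ and blow-up off $F$ — is exactly the requirement that $F$ be closed in $M$, i.e.\ a flat; equivalently, that the tropicalization of $\cL^{-1}$, which is the Bergman fan of $M$, have a cone meeting the orthant indexing $T_F$. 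Tracking the finite part of the limit identifies $\cL^{-1}\cap T_F$ with $\cL_{|F}^{-1}\cap(\CC^*)^F$, giving the asserted product.

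Finally, for \ref{TC}, the crux is that at a point $p$ with $\supp(p)=F$ the matroid structure factors locally as $M|F\oplus M/F$, yielding an étale-local product decomposition of $\cL^{-1}$ near $p$ into the germ of $\cL_{|F}^{-1}$ at the torus point $p_{|F}$ and the germ of $\cL_{/F}^{-1}$ at the origin. Making this decomposition precise is the hard step; once it is in hand the conclusion is formal, since the tangent cone of a product is the product of tangent cones. The germ of $\cL_{|F}^{-1}$ at $p_{|F}$ is smooth (it lies in the open torus, where $\mathrm{crem}$ is an isomorphism), so its tangent cone is the tangent space, which is carried isomorphically onto $\cL_{|F}$ by the differential of $\mathrm{crem}$; and the germ of the cone $\cL_{/F}^{-1}$ at the origin has tangent cone $\cL_{/F}^{-1}$ itself. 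Combining these gives $TC_p\cL^{-1}=TC_{p_{|F}}\cL_{|F}^{-1}\times\cL_{/F}^{-1}\simeq\cL_{|F}\times\cL_{/F}^{-1}$, as claimed.
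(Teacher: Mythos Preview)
Your plan is sound and largely aligns with the paper's own treatment, which does not give a self-contained proof but simply records that all four parts follow from the explicit Gr\"obner basis for the ideal of $\cL^{-1}$ computed in \cite[Theorem~4]{PS06}---precisely the broken-circuit degeneration you invoke for \ref{deg} and the Cohen--Macaulay half of \ref{CM}---together with \cite[Theorem~24]{SSV13} for \ref{TC}. Your one genuine departure is the argument for $\cL^{-1}\cap\cL^\perp=\{\mathbf 0\}$: instead of extracting it from the Gr\"obner basis as \cite{PS06} does (the paper alludes to this in the remark after Lemma~\ref{lem:intersectionAtInfinity}), you use the pairing identity $\langle x,x^{-1}\rangle=|\supp(x)|$, first on torus points and then on boundary points via projection to $\CC^F$. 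This is correct and more elementary than the cited approach; note that the boundary step only needs the containment $p_{|F}\in\cL_{|F}^{-1}$, which you get from closure-of-projection without appealing to \ref{strata}, so there is no circularity. For \ref{strata} and \ref{TC} your Bergman-fan and \'etale-local-product sketches are in the spirit of \cite{SSV13}; you rightly flag the product decomposition as the substantive work, and the paper does not reproduce it either. One small point on \ref{TC}: the differential of coordinatewise inversion carries the tangent space of $\cL_{|F}^{-1}$ at $p_{|F}$ onto $p_{|F}^2\cL_{|F}$ as a \emph{subspace} of $\CC^F$, not onto $\cL_{|F}$ itself, so the final $\simeq$ in the statement is an abstract isomorphism rather than an equality---the paper makes exactly this clarification after the theorem.
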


All four statements in Theorem~\ref{thm:reclinsp} can be derived easily from the Gr\"obner basis for the defining ideal of $\cL^{-1}$ computed in \cite[Theorem 4]{PS06}.
In \cite{SSV13}, the statement of Theorem~\ref{thm:reclinsp}.\ref{TC} originally reads $TC_p\cL^{-1} = p_{|F}^2 \cL_{|F} \times \cL_{/F}^{-1}$, where $p^2 \cL$ denotes the linear subspace $\{ (p_1^2 x_1, \ldots, p_n^2 x_n) \mid (x_1, \ldots, x_n) \in \cL\}$.  It is straightforward to verify that $TC_{p_{|F}}\cL_{|F}^{-1} = p_{|F}^2 \cL_{|F}$.  Theorem~\ref{thm:reclinsp}.\ref{deg} also follows from \cite[Theorem 1.2]{Ter02}, which expressed the Hilbert series of the ideal of $\cL^{-1}$ in terms of the characteristic polynomial $\chi_M$.

\medskip
The reciprocal linear space $\cL^{-1}$ encodes the left half of the system of equations in Equation~\eqref{eq:score}.
Let us now consider the variety encoding the condition $(s_1x_1^d - x_1, \ldots, s_nx_n^d - x_n) \in \cL^\perp$.  For an integer $d\geq 1$ and a parameter $s = (s_1, \ldots, s_n) \in \CC^n$, we define the \textbf{score variety} as
\[
Y(\cL, s, d) := \{ (x_1, \ldots, x_n) \in \CC^{[n]} \mid (s_1x_1^d-x_1, \ldots, s_nx_n^d - x_n) \in \cL^\perp\}.
\]
We will simply write $Y$ when we trust that no confusion will arise.  We note here that score varieties are smooth for a generic choice of $s\in \CC^n$.

\begin{lemma}\label{lem:scoresmooth}
For $d \geq 1$ and a generic choice of $(s_1, \ldots, s_n)\in \CC^n$, the score variety $Y$ is smooth.
\end{lemma}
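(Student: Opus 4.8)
The plan is to realize the score varieties $Y(\cL,s,d)$ as the fibers of a projection from a single smooth incidence variety and then invoke generic smoothness. First I would fix an $r\times n$ matrix $A$ whose row span is $\cL$, so that $\cL^\perp = \ker A$ and $Y(\cL,s,d)$ is cut out in $\CC^{[n]}$ by the $r$ polynomials $f_j(x) := \sum_{i=1}^n A_{ji}(s_ix_i^d - x_i)$, $j = 1,\ldots,r$. Viewing the $f_j$ now as polynomials in the pair of variables $(x,s)$, they define the incidence variety
\[
Z := \{(x,s)\in\CC^{[n]}\times\CC^{[n]} \mid (s_1x_1^d - x_1,\ldots,s_nx_n^d - x_n)\in\cL^\perp\},
\]
and the fiber of the projection $\pi\colon Z\to\CC^{[n]}$ onto the $s$-coordinates over a given $s$ is precisely $Y(\cL,s,d)$. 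Note that $\pi$ is surjective, since $(0,s)\in Z$ for every $s$.

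The main step is to show that $Z$ is smooth when $d\geq 2$. In the $r\times 2n$ Jacobian matrix of $(f_1,\ldots,f_r)$, the column indexed by $x_i$ equals $(ds_ix_i^{d-1}-1)$ times the $i$-th column of $A$, while the column indexed by $s_i$ equals $x_i^d$ times the $i$-th column of $A$. The key point is that for $d\geq 2$, at every point of $\CC^{[n]}\times\CC^{[n]}$ and for each $i\in[n]$, at least one of these two columns is a nonzero scalar multiple of the $i$-th column of $A$: if $x_i\neq 0$, use the $s_i$-column; if $x_i = 0$, then $ds_ix_i^{d-1}-1 = -1$ and the $x_i$-column is minus the $i$-th column of $A$. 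Hence the columns of the Jacobian span the column space of $A$, which is all of $\CC^r$ because $\rk A = r$; so the Jacobian has rank $r$ everywhere, and, being cut out by $r$ equations, $Z$ is smooth of pure dimension $2n-r$. Since $\CC$ has characteristic zero, generic smoothness applies to $\pi$ (after restricting, if one wants to be careful, to the irreducible components of $Z$ that dominate $\CC^{[n]}$ and discarding the images of the finitely many that do not): there is a dense open subset $V\subseteq\CC^{[n]}$ such that $Y(\cL,s,d) = \pi^{-1}(s)$ is smooth for every $s\in V$.

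It remains to treat $d=1$, in which case $Z$ is genuinely singular (for instance along $\{x=0,\ s_i=1\ \text{for some}\ i\}$), so the Jacobian argument can only be run over the open locus $\{s_i\neq 1\ \text{for all}\ i\}$, where it still goes through. Even more directly, for $d=1$ the set $Y(\cL,s,1)$ is simply the linear subspace $\ker\bigl(A\cdot\operatorname{diag}(s_1-1,\ldots,s_n-1)\bigr)$ of $\CC^{[n]}$, which for generic $s$ has the same rank $r$ as $A$ and is therefore a linear space of dimension $n-r$, in particular smooth. Throughout, the only real obstacle is the behavior of $Z$ at points where coordinates of $x$ vanish, and the argument hinges on the elementary observation that for $d\geq 2$ the vanishing of $x_i$ is compensated by the constant term $-1$ of $ds_ix_i^{d-1}-1$ — precisely the compensation that is missing when $d=1$.
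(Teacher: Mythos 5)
Your proof is correct, but it takes a genuinely different route from the paper's. The paper fixes a generic $s$ and argues directly on the Jacobian $A\cdot\operatorname{diag}(ds_1x_1^{d-1}-1,\ldots,ds_nx_n^{d-1}-1)$: if its rank drops at a point of $Y$, the set of coordinates where $ds_ix_i^{d-1}-1$ vanishes contains a cocircuit $C^*$ of $M$, and pairing the corresponding row-space vector $v$ with the defining equations forces $(\frac{1}{d}-1)\sum_{i\in C^*}v_ix_i=0$ while each such $x_i$ is a $(d-1)$-th root of $\frac{1}{ds_i}$, which a generic choice of $s$ rules out. You instead work with the incidence variety $Z\subseteq\CC^{[n]}\times\CC^{[n]}$ over the parameter space, show it is smooth everywhere when $d\geq 2$ (the key observation being that for each $i$ either the $s_i$-column, scaled by $x_i^d\neq 0$, or the $x_i$-column, scaled by $ds_ix_i^{d-1}-1=-1$ when $x_i=0$, reproduces the $i$-th column of $A$, so the Jacobian has rank $r$ at every point), and then apply generic smoothness to the projection to the $s$-space, with $d=1$ handled by linearity just as in the paper. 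This is structurally the same device the paper itself deploys elsewhere --- the incidence variety of Lemma~\ref{lem:intersectionAtInfinity} and the generic-smoothness step of Lemma~\ref{lem:final} --- so it fits the paper's toolkit naturally; it trades the paper's explicit description of the bad parameter locus (useful if one wants effective genericity conditions on $s$) for a softer argument that avoids the cocircuit combinatorics entirely, at the cost of leaning on characteristic zero for this lemma as well. The caveats you flag are handled correctly: since $Z$ is smooth its irreducible components are disjoint, so running generic smoothness on the dominating components and discarding the images of the rest is legitimate, and the scheme-theoretic fiber of the projection is exactly the scheme cut out by the $r$ minimal generators of $I_Y$, which is what the lemma asserts to be smooth.
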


\begin{proof}
If $d=1$, then $Y$ is linear, so suppose $d\geq 2$.  Let $A$ be the $r\times n$ matrix whose row span equals $\cL$, and let $g_1, \ldots, g_r$ be the polynomials obtained by multiplying the rows of $A$ with $(s_1x_1^d - x_1, \ldots, s_nx_n^d - x_n)^T$.  These minimally generate the defining ideal $I_Y\subseteq \CC[x_1, \ldots, x_n]$ of $Y$.  The Jacobian matrix with respect to these minimal generators is
 \begin{equation}\label{eq:jacobian}
\operatorname{Jac}(x) = A \cdot \operatorname{diag}(ds_1x_1^{d-1} - 1, \ldots, ds_nx_n^{d-1} -1),
\end{equation}
i.e.\ matrix $A$ whose $i$-th column is scaled by $ds_ix_i^{d-1} -1$ for each $1\leq i \leq n$.
Suppose now that $\operatorname{Jac}(x)$ has rank $< r$ for some $x\in \CC^n$, that is, the restriction $M|I$ of the matroid $M$ to the set
$
I = \{i\in[n] \mid ds_ix_i^{d-1} - 1 \neq 0\}
$
has rank $<r$.  This happens if and only if $I$ is contained in a hyperplane flat of $M$, or equivalently, the subset $J := [n]\setminus I$ contains a cocircuit of $M$.
As the minimal supports of the row-space of $A$ constitute the cocircuits of $M$, let $v = (v_1, \ldots, v_n) \in \CC^n$ be the element in the row-space of $A$ whose support $C^* = \supp(v) \subseteq [n]$ is a cocircuit of $M$ contained in $J$.  Then we have
\[
v \cdot (s_1x_1^d - x_1, \ldots, s_nx_n^d - x_n)^T = \sum_{i\in C^*} v_i (s_ix_i^d - x_i) = (\textstyle{\frac{1}{d}} - 1) \displaystyle \sum_{i\in C^*} v_i  x_i
\]
where last equality follows from $ds_ix_i^{d-1} -1 = 0$ for $i\in J$.  This quantity needs to be zero if $x\in Y$.  We claim that for a general choice of $(s_1, \ldots, s_n)$ this quantity can never be zero:  Consider the set
\[
Z := \{ (\zeta_1, \ldots, \zeta_n) \in \CC^n \mid \zeta_i \text{ is a $(d-1)$-th root of } \textstyle{\frac{1}{ds_i}} \text{ if $i\in C^*$}\}.
\]
For a general choice of $(s_1,\ldots,s_n)$, no element of $Z$ satisfies $\sum_{i\in C^*} v_ix_i=0$.
\end{proof}

\section{Two genericity lemmas}\label{sec:keylemmas}

We now present the two key technical lemmas for our future intersection multiplicity computations.  Both make essential use of the fact that the parameter $s\in \CC^n$ can be chosen generically, and the second lemma uses that $\CC$ has characteristic zero.  To state the first lemma, let us define a subscheme of $\CC^{[n]}$
\[
Y_\infty(\cL,s,d) := \{(x_1, \ldots, x_n)\in \CC^{[n]} \mid (s_1x_1^d, \ldots, s_nx_n^d) \in \cL^\perp\}.
\]

\begin{lemma}\label{lem:intersectionAtInfinity}
For a generic choice of $s\in \CC^n$ and any integer $d\geq 1$, one has
\[
\cL^{-1}\cap Y_\infty(\cL,s,d) = \{\mathbf 0\}.
\]
\end{lemma}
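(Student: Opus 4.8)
The plan is to show that $\cL^{-1} \cap Y_\infty$ is a cone (both varieties are defined by homogeneous ideals, using Theorem~\ref{thm:reclinsp}.\ref{deg} for $\cL^{-1}$ and the fact that the $d$ generators $\sum_i a_{ji} s_i x_i^d$ of $Y_\infty$ are homogeneous of degree $d$), so it suffices to prove that the projective varieties $\PP\cL^{-1}$ and $\PP Y_\infty$ are disjoint in $\PP^{n-1}$, or equivalently that $\mathbf 0$ is the only point in the affine intersection.

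First I would stratify $\cL^{-1}$ by support: by Theorem~\ref{thm:reclinsp}.\ref{strata}, any point $p$ of $\cL^{-1}$ has $\supp(p) = F$ for some flat $F$ of $M$, and lies in $(\cL_{|F}^{-1}\cap(\CC^*)^F)\times\{0\}^{[n]\setminus F}$. So I would fix a flat $F$ and analyze when a point $p$ with support exactly $F$ can also lie in $Y_\infty$. Writing $v^{(j)}$ for a basis of $\cL^\perp$ (rows of a matrix whose row span is $\cL^\perp$), the condition $(s_1 x_1^d, \ldots, s_n x_n^d)\in\cL^\perp$ is automatically about the coordinates in $F$ only, since $x_i = 0$ for $i\notin F$. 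I would then observe that the restricted conditions $\sum_{i\in F} v^{(j)}_i s_i x_i^d = 0$ cut out, inside $(\CC^*)^F$, a variety that for generic $s$ must be disjoint from $\cL_{|F}^{-1}\cap(\CC^*)^F$: indeed $\cL_{|F}^{-1}\cap(\CC^*)^F$ is a fixed $(\rk F)$-dimensional irreducible variety not depending on $s$, while the monomial-like equations $\sum_{i\in F} w_i^{(j)} s_i x_i^d = 0$ (where $w^{(j)}$ is the projection of $v^{(j)}$, i.e.\ a spanning set of $(\cL_{|F})^\perp \subseteq \CC^F$) generically impose $\dim \cL_{|F}^\perp = |F| - \rk F$ independent conditions that, on the torus $(\CC^*)^F$, are transverse to every fixed subvariety. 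The cleanest way to make this rigorous is a substitution/counting argument: after the coordinate change $y_i = s_i^{1/d} x_i$ (on a chart), the equations $\sum w_i^{(j)} y_i^d = 0$ become independent of $s$ but the variety $\cL_{|F}^{-1}\cap(\CC^*)^F$ gets rescaled coordinatewise by generic $s_i^{-1/d}$, and a generic diagonal torus translate of a fixed subvariety avoids any fixed subvariety of complementary-or-higher codimension on the torus — this is a standard generic-position argument.

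Alternatively, and perhaps more in keeping with the ``generic $s$'' philosophy of the paper, I would argue by a dimension count over the parameter space: consider the incidence variety $\{(x, s) : x\in \cL^{-1}\cap(\CC^*)^F,\ (s_ix_i^d)_{i\in F}\in\cL_{|F}^\perp\}$ and show its projection to the $s$-space is not dominant, because for fixed $x\in\cL^{-1}\cap(\CC^*)^F$ the linear conditions $\sum_{i\in F} w_i^{(j)} x_i^d s_i = 0$ on $s$ cut the fiber down by $\dim\cL_{|F}^\perp = |F|-\rk F > 0$ (this is where looplessness and $F$ being a proper-support flat matter — if $F\ne\emptyset$ then $\cL_{|F}^\perp\ne 0$ unless $\cL_{|F} = \CC^F$, and the case $\cL_{|F}=\CC^F$ forces $F=\emptyset$ since $M$ is loopless and $F$ is a flat, giving $p=\mathbf 0$). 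Summing over the finitely many flats $F$ and excluding the measure-zero locus of bad $s$ then gives the claim.

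The main obstacle is the transversality/genericity step: proving rigorously that for generic $s$ the equations $\sum_{i\in F} w_i^{(j)} s_i x_i^d = 0$ have no common solution with $\cL_{|F}^{-1}\cap(\CC^*)^F$. One must be careful that the $w^{(j)}$ need not be ``generic'' coefficient vectors — they are determined by $\cL$ — so the argument cannot simply invoke genericity of the linear forms; it must exploit genericity of the diagonal scaling $s$. I expect the torus-translate argument (a fixed subvariety $V$ of the torus and a fixed subvariety $W$ of the torus with $\dim V + \dim W < \dim(\CC^*)^F$, hence a generic diagonal translate $t\cdot V$ misses $W$) to be the right tool, applied with $V = \cL_{|F}^{-1}\cap(\CC^*)^F$ of dimension $\rk F$ and $W = \{y : \sum w_i^{(j)} y_i^d = 0\}\cap(\CC^*)^F$ of dimension $\rk F - 1 < \rk F$ after the $d$-th power map (which is finite on the torus, hence preserves the relevant dimension count); the inequality $\rk F + (\rk F - 1) < 2\rk F \le \dots$ is not quite what's needed directly, so the precise formulation is that $W$ has codimension $\ge 1$ and a generic torus translate of the irreducible $V$ is not contained in the fixed proper closed set $W$, which is immediate. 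Assembling these pieces and handling the finitely many flats uniformly (one generic $s$ works for all) completes the proof.
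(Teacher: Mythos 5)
Your overall strategy---stratify $\cL^{-1}$ by flats and run a genericity-in-$s$ argument via an incidence variety over the $s$-space (equivalently, absorb $s$ into a generic torus translate)---is essentially the paper's, but both of your routes stall at the decisive step, and a duality slip feeds into your counts. The slip: membership of $(s_ix_i^d)_i$ in $\cL^\perp$ is expressed by pairing against a basis of $\cL$ (the rows of $A$), not against vectors of $\cL^\perp$; for a point supported on a flat $F$ the condition is $(s_ix_i^d)_{i\in F}\in(\cL_{|F})^\perp$, which imposes $\rk F$ (not $|F|-\rk F$) independent linear conditions on $s$, because the diagonal map $s_{|F}\mapsto(s_ix_i^d)_{i\in F}$ is an isomorphism of $\CC^F$ and $(\cL_{|F})^\perp$ has codimension $\rk F$. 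Likewise your $W$ is the power-map preimage of $(\cL_{|F})^\perp\cap(\CC^*)^F$, of dimension $|F|-\rk F$ (or empty), not $\rk F-1$. Your parenthetical that $\cL_{|F}=\CC^F$ forces $F=\emptyset$ is false: any independent nonempty flat (e.g.\ a singleton in a uniform matroid of rank at least $2$) satisfies $\rk F=|F|$, which is exactly where your dualized version would have no equations at all.

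The genuine gap is that, even after these corrections, your dimension counts land exactly on the boundary and therefore do not conclude. Over the stratum of support $F$ the incidence variety has dimension $\rk F+(n-\rk F)=n$, equal to $\dim$ of the $s$-space, so non-dominance does not follow from dimension alone; in the translate picture, $\dim V+\dim W=\rk F+(|F|-\rk F)=|F|$, so generic position (Kleiman) only yields that a generic translate $t\cdot V$ meets $W$ in a finite set, not the empty set---and your closing substitute, that $t\cdot V$ is not \emph{contained} in $W$, is strictly weaker than the disjointness you need. The missing ingredient is exactly the cone structure you state at the outset but never use: $V$, $W$, and the fibers of the incidence variety over each fixed $s\neq\mathbf 0$ are invariant under the diagonal $\CC^*$-scaling of $x$, so any nonempty intersection is positive-dimensional; projectivizing in the $x$-direction (or quotienting the torus by the diagonal $\CC^*$) drops every relevant dimension by one, and then the counts close: $(\rk F-1)+(|F|-\rk F-1)<|F|-1$, respectively the image in $s$-space has dimension at most $n-1$. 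This is precisely how the paper argues: it forms the bihomogeneous incidence variety in $\CC^{[n]}\times\CC^{[n]}$ (your second construction, taken over all of $\cL^{-1}$ at once), of dimension $n$, passes to its biprojectivization in $\PP^{n-1}\times\PP^{n-1}$, of dimension $n-2$, and observes that its projection to the second factor cannot dominate $\PP^{n-1}$. With that one observation inserted, your argument becomes a correct, if more stratified than necessary, version of the paper's proof.
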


\begin{proof}
Let us define a subscheme $V \subseteq \CC^{[n]}\times \CC^{[n]}$ by
\[
V = \{ (x,s) \in \CC^{[n]}\times \CC^{[n]} \mid x \in \cL^{-1} \text{ and } (s_1x_1^d, \ldots, s_nx_n^d) \in \cL^\perp\}.
\]
For any point $x$ in the dense open loci $\cL^{-1}\cap (\CC^*)^{[n]}$ of $\cL^{-1}$, the set $\{s \in \CC^{[n]}\mid (s_1x_1^d, \ldots, s_nx_n^d) \in \cL^\perp\}$ is a linear subspace of dimension $n-r$, so the dimension of $V$ is $r + (n-r) = n$.  Moreover, the subscheme $V$ is bi-homogeneous, and thus the bi-projectivization $\overline{V} \subseteq \PP^{n-1} \times \PP^{n-1}$ has dimension $n-2$.  Writing $\pi_2$ for the projection of $\overline{V}$ to the second $\PP^{n-1}$, we hence find that the loci $\PP^{n-1}\setminus \pi_2(\overline{V})$ is nonempty and open in $\PP^{n-1}$.  That is, the affine cone over $\PP^{n-1}\setminus \pi_2(\overline{V})$ is dense open in $\CC^{[n]}$, and $\cL^{-1} \cap Y_\infty(\cL,s,d) = \{\mathbf 0\}$ for any $s\in \CC^{[n]}$ in the affine cone.  
\end{proof}

\begin{rem}
When $s = (1,\ldots, 1)$ and $d=1$, Lemma~\ref{lem:intersectionAtInfinity} is the second half of Theorem~\ref{thm:reclinsp}.\ref{CM}, which was established by an explicit Gr\"obner basis computation.  For $d\geq 2$ however, the lemma fails in general with $s = (1, \ldots, 1)$.  
\end{rem}

\begin{lemma}\label{lem:final}
For a generic choice of $s\in \CC^n$ and any integer $d\geq 1$, the intersection $\cL^{-1} \cap Y(\cL, s, d) \cap (\CC^*)^{[n]}$ is either empty or smooth of dimension 0.
\end{lemma}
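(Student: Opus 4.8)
The plan is to realize the intersection $\cL^{-1} \cap Y(\cL,s,d) \cap (\CC^*)^{[n]}$ as a fiber of a suitable morphism and invoke generic smoothness. Concretely, I would set up an incidence variety
\[
W := \{ (x,s) \in (\CC^*)^{[n]} \times \CC^n \mid x\in \cL^{-1} \text{ and } x\in Y(\cL,s,d)\},
\]
and study the projection $\pi_2\colon W \to \CC^n$. The fibers of $\pi_2$ are precisely the intersections we care about, so by generic smoothness (here we use $\operatorname{char}\CC = 0$, as flagged in the paragraph introducing the lemma) it suffices to show that the generic fiber of $\pi_2$ is either empty or smooth of dimension $0$. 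Equivalently, I would show $\dim W = n$ and that $\pi_2$ is dominant-or-its-image-has-a-dense-open-over-which-fibers-are-finite, i.e.\ that $W$ has dimension equal to $\dim\CC^n = n$ and no component of $W$ maps to a proper subvariety of $\CC^n$.

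To compute $\dim W$, I would instead project to the first factor via $\pi_1\colon W\to \cL^{-1}\cap(\CC^*)^{[n]}$. This base is smooth of dimension $r$ (it is isomorphic to $\cL\cap(\CC^*)^{[n]}$). For a fixed $x$ in this base, the condition $(s_1x_1^d - x_1,\dots,s_nx_n^d-x_n)\in\cL^\perp$ is, since all $x_i\neq 0$, an \emph{affine-linear} (in fact a translate of a linear) condition on $s$ cutting out an affine subspace of $\CC^n$ of dimension exactly $n-r$ — this uses that the map $s\mapsto (s_1 x_1^d,\dots,s_n x_n^d)$ is a linear isomorphism of $\CC^n$ onto itself, so composing with the quotient $\CC^n\twoheadrightarrow \CC^n/\cL^\perp$ gives a surjection with $(n-r)$-dimensional fibers. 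Hence every fiber of $\pi_1$ has dimension $n-r$, so $W$ is irreducible-over-each-component of the base and $\dim W = r + (n-r) = n$. Because every fiber of $\pi_1$ is an affine subspace (in particular irreducible), $W$ has the same number of components as the base, and none is contained in a fiber of $\pi_2$ of positive-dimensional image-complement; more carefully, one checks that $\pi_2$ restricted to each component is generically finite by a dimension count, after verifying $\pi_2$ is not contracting.

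The main obstacle is the last point: ruling out the possibility that $\pi_2$ has positive-dimensional generic fiber, i.e.\ that for generic $s$ the intersection, though perhaps nonempty, fails to be $0$-dimensional. A clean way around this is to combine the dimension count with Lemma~\ref{lem:intersectionAtInfinity}: the projective closure of the fiber meets the hyperplane at infinity in $\PP\cL^{-1}\cap\PP Y_\infty(\cL,s,d)$, which is empty for generic $s$ by Lemma~\ref{lem:intersectionAtInfinity}; therefore $\cL^{-1}\cap Y(\cL,s,d)$ is already a \emph{projective} variety inside $\CC^{[n]}$, hence finite, and in particular $\cL^{-1}\cap Y\cap(\CC^*)^{[n]}$ is finite. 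Finiteness plus generic smoothness of $\pi_2$ (valid in characteristic zero, on the smooth loci of $\cL^{-1}\cap(\CC^*)^{[n]}$ and $Y$, the latter smooth for generic $s$ by Lemma~\ref{lem:scoresmooth}) then forces the fiber to be reduced of dimension $0$, i.e.\ smooth of dimension $0$, whenever it is nonempty. The only genericity conditions invoked are those already available: smoothness of $Y$ (Lemma~\ref{lem:scoresmooth}), triviality of the intersection at infinity (Lemma~\ref{lem:intersectionAtInfinity}), and generic smoothness of $\pi_2$ over $\CC^n$.
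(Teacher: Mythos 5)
Your proof is correct in substance and runs on the same engine as the paper's — generic smoothness in characteristic zero over the space of parameters $s$ — but it packages the family differently, so a comparison is worthwhile. The paper fixes $s_{r+1},\dots,s_n$ generically, views the $r$ generators of the ideal of $Y$ as pencils in the remaining parameters $s_1,\dots,s_r$ whose base loci lie inside the coordinate hyperplanes, and thereby obtains a morphism $\cL^{-1}\cap(\CC^*)^{[n]}\to(\PP^1)^r$ from an $r$-dimensional smooth source to an $r$-dimensional target; generic smoothness then immediately gives that the general fiber, which is exactly the intersection in question, is empty or smooth of dimension $0$, with no separate dimension count or finiteness argument needed. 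You instead pass to the incidence variety $W\subset(\CC^*)^{[n]}\times\CC^n$ and project to the $s$-space. This works, with two points to tighten. First, to invoke generic smoothness for $\pi_2$ you need $W$ itself to be smooth; this does hold — since all $x_i\neq 0$, the defining equations of $W$ over the smooth base $\cL^{-1}\cap(\CC^*)^{[n]}\simeq\cL\cap(\CC^*)^{[n]}$ have constant rank $r$ in $s$, so $\pi_1$ is an affine-space bundle and $W$ is smooth and irreducible of dimension $n$ — but you should state this explicitly rather than only gesture at ``smooth loci.'' Second, your detour through Lemma~\ref{lem:intersectionAtInfinity} to rule out positive-dimensional generic fibers is legitimate (and not circular, since that lemma does not depend on the present one), but it is unnecessary: every component of $W$ has dimension at most $r+(n-r)=n$, so by the theorem on fiber dimension each component either fails to dominate $\CC^n$ (and is missed by a generic $s$) or has generically finite fibers, and generic smoothness then finishes. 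In short, the paper's pencil construction buys a shorter argument with no dimension bookkeeping, while your incidence-variety version is a natural and slightly more flexible variant at the cost of these extra verifications.
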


\begin{proof}
Without loss of generality, we assume that the $r\times n$ matrix $A$ whose row-span equals $\cL$ is of the form $[I_r \ | \  A']$, where $I_r$ is the $r\times r$ identity matrix.  For $1\leq i \leq r$, let $a'_i$ be the $i$-th row of $A'$.  The ideal of $Y$ is minimally generated by
\[
\mathfrak d = \{(s_ix_i^d - x_i) - a'_i \cdot (s_{r+1}x_{r+1}^d-x_{r+1}, \ldots, s_nx_n^d-x_n)^T\mid 1\leq i \leq r\}.
\]
Fixing a generic choice of $s_{r+1}, \ldots, s_n$, and letting $s_1, \ldots, s_r$ vary freely, for each $i = 1, \ldots, r$ we may consider $(s_ix_i^d - x_i) - a'_i \cdot (s_{r+1}x_{r+1}^d-x_{r+1}, \ldots, s_nx_n^d-x_n)^T$ as a pencil $\mathfrak d_i$ of hypersurfaces in $\CC^{[n]}$.  Note that the union of the base loci of $\mathfrak d_1, \ldots, \mathfrak d_r$ is contained in the union of the coordinate hyperplanes.  Thus, we obtain a map $\cL^{-1}\cap (\CC^*)^{[n]} \to (\PP^1)^r$ of smooth varieties.  By generic smoothness \cite[III.10.7]{Har77}, the general fiber, which is the intersection $\cL^{-1} \cap Y \cap (\CC^*)^{[n]}$ for a general choice of $s\in \CC^n$, is either empty or smooth of dimension 0.
\end{proof}

Let us now denote 
\[
\begin{split}
\mathcal D(\cL,d) :=& \text{ the degree of the (empty or 0-dimensional) subscheme} \\
&\ \cL^{-1} \cap Y(\cL,s,d) \cap (\CC^*)^{[n]} \subset \CC^{[n]}
\end{split}
\]
for a generic choice of $s\in \CC^n$, which is equal to the number of points in the intersection since it is smooth by Lemma~\ref{lem:final}.  Theorem~\ref{thm:main2} is now equivalently stated as $\mathcal D(\cL,d) = (-d)^r\chi_M(\frac{1}{d})$, and Proposition~\ref{prop:scoreprep} states that $\mathcal D(\cL,2) = \rmld(\cL)$.

\section{Total intersection multiplicity}\label{sec:total}

We now compute the degree of the intersection $\cL^{-1} \cap Y(\cL,s,d)$ as a subscheme of $\CC^{[n]}$ in two different ways.  First, we have a B\'ezout-like computation.

\begin{prop}\label{prop:total}
For a generic choice of $s\in \CC^n$, the intersection $\cL^{-1} \cap Y(\cL,s,d)$ is a 0-dimensional scheme of degree $d^r|\mu(M)|$.
\end{prop}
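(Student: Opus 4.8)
The plan is to compute the degree of $\cL^{-1}\cap Y(\cL,s,d)$ via a Bézout-type argument, using the known degree of $\overline{\PP\cL^{-1}}$ together with a dimension count of the intersection at infinity.

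First I would pass to projective closures. Since the ideal of $\cL^{-1}$ is homogeneous (Theorem~\ref{thm:reclinsp}\ref{deg}), $\cL^{-1}$ is a cone, and $\PP\cL^{-1}\subseteq \PP^{n-1}$ has degree $|\mu(M)|$ and dimension $r-1$. On the other hand, the $r$ generators $g_1,\ldots,g_r$ of the ideal $I_Y$ each have degree $d$. The scheme $\cL^{-1}\cap Y$ is by Lemma~\ref{lem:final} (combined with Lemma~\ref{lem:intersectionAtInfinity}, which I address below) $0$-dimensional, so $\cL^{-1}$ and the $r$ hypersurfaces $\{g_i=0\}$ meet properly in $\CC^{[n]}$. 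By refined Bézout (or by slicing $\PP\cL^{-1}$, an $(r-1)$-dimensional projective variety of degree $|\mu(M)|$, with $r$ hypersurfaces of degree $d$ in a suitable compactification $\PP^n$), the degree of the $0$-dimensional intersection is at most $d^r|\mu(M)|$, with equality provided there is no excess intersection "at infinity."

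Controlling the behavior at infinity is where Lemma~\ref{lem:intersectionAtInfinity} does the work, and I expect this to be the main obstacle to make precise. Homogenizing the $g_i$ with a new variable $x_0$, the intersection with the hyperplane at infinity $\{x_0=0\}$ is governed by the leading forms of the $g_i$, which are exactly the entries of $A\cdot(s_1x_1^d,\ldots,s_nx_n^d)^T$; thus the part of $\overline{\cL^{-1}}\cap\overline{Y}$ lying in $\{x_0=0\}$ is $\overline{\PP\cL^{-1}}$ intersected with $\PP\big(Y_\infty(\cL,s,d)\big)$. Lemma~\ref{lem:intersectionAtInfinity} says $\cL^{-1}\cap Y_\infty(\cL,s,d)=\{\mathbf 0\}$ for generic $s$, so the corresponding projective schemes are disjoint in $\PP^{n-1}\subseteq\PP^n$, i.e.\ $\overline{\cL^{-1}}\cap\overline{Y}$ is disjoint from the hyperplane at infinity. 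Hence the entire intersection lives in the affine chart $\CC^{[n]}$, and Bézout's theorem applies without correction: $\deg(\cL^{-1}\cap Y) = d^r\cdot\deg\PP\cL^{-1} = d^r|\mu(M)|$.

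To make the Bézout count rigorous rather than an inequality, I would invoke that a $0$-dimensional complete intersection (or proper intersection of a Cohen–Macaulay scheme with hypersurfaces) has degree exactly the product of the degrees when no components escape to infinity — here $\cL^{-1}$ is Cohen–Macaulay by Theorem~\ref{thm:reclinsp}\ref{CM}, so cutting it by $r$ hypersurfaces that meet it in dimension $0$ is a proper intersection and the total multiplicity is $d^r|\mu(M)|$ with no deficiency. The one subtlety to check is that $\cL^{-1}\cap Y$ really is $0$-dimensional (not lower-dimensional or empty on some component): this follows because $Y\supseteq \cL^\perp$ when... more carefully, one notes $\cL^{-1}$ is pure of dimension $r$, each $g_i$ is a nonzerodivisor on the successive quotients precisely because the intersection does not pick up components at infinity (Lemma~\ref{lem:intersectionAtInfinity}) and the generic fiber computation of Lemma~\ref{lem:final} shows it is nonempty of the expected dimension when $|\mu(M)|\neq 0$, i.e.\ always since $M$ is loopless. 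Assembling these gives the stated degree $d^r|\mu(M)|$.
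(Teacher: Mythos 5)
Your overall route is the paper's: homogenize the generators of $I_Y$, use Lemma~\ref{lem:intersectionAtInfinity} to show the intersection misses the hyperplane at infinity, and use Cohen--Macaulayness of $\cL^{-1}$ together with $\deg \PP\cL^{-1} = |\mu(M)|$ to turn the B\'ezout bound into an equality. The one step you do not actually establish is the $0$-dimensionality of $\cL^{-1}\cap Y$, and that is precisely the step the paper treats with care. Lemma~\ref{lem:final} only controls $\cL^{-1}\cap Y\cap(\CC^*)^{[n]}$; it says nothing about components supported in coordinate hyperplanes, and such components always exist when $d\geq 1$ (the origin, for instance; cf.\ Remark~\ref{rem:MLdeg}). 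Your closing paragraph is also circular: you cannot argue that the $g_i$ are nonzerodivisors ``because the intersection does not pick up components at infinity''---in the Cohen--Macaulay argument the logic runs the other way (first show the intersection is $0$-dimensional, conclude the homogenized forms are a regular sequence, then read off the degree $d^r\deg\overline{\cL^{-1}}$); moreover Lemma~\ref{lem:final} asserts ``empty or smooth of dimension $0$,'' not nonemptiness.

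There are two correct ways to close the gap. The paper stratifies $\cL^{-1}\cap Y$ by supports via Theorem~\ref{thm:reclinsp}.\ref{strata}: the stratum with support a flat $F$ equals $\bigl(\cL_{|F}^{-1}\cap Y(\cL_{|F},s_{|F},d)\cap(\CC^*)^F\bigr)\times\{0\}^{[n]\setminus F}$, and Lemma~\ref{lem:final} applied to each restriction $\cL_{|F}$ shows every stratum is finite. Alternatively---and this is implicit in what you wrote, so you are close---once Lemma~\ref{lem:intersectionAtInfinity} (via the leading forms $A\cdot(s_1x_1^d,\ldots,s_nx_n^d)^T$ you computed) shows that $\overline{\cL^{-1}}\cap V(\overline f_1,\ldots,\overline f_r)$ is disjoint from $V(x_0)$, this closed subscheme of $\PP^n$ lies in an affine chart, and any positive-dimensional closed subscheme of $\PP^n$ meets every hyperplane; hence it is finite, which yields the proper intersection without any strata argument. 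If you make that one sentence explicit (and intersect with the hypersurfaces $V(\overline f_i)$ rather than with the closure $\overline Y$, which may differ at infinity), your proof goes through.
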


\begin{proof}
For $i = 1, \ldots, r$, let $\overline f_i \in \CC[x_0, x_1, \ldots, x_n]$ be the homogeneous polynomial obtained as $i$-th row of $A$ times $(s_1x_1^d - x_1x_0^{d-1}, \ldots, s_nx_n^d - x_nx_0^{d-1})^T$.  We first claim that $(\overline f_1, \ldots, \overline f_r)$ forms a regular sequence on the projective closure $\overline {\cL^{-1}} \subset \PP^n$.  
Since the projective variety $\overline {\cL^{-1}}$ is (arithmetically) Cohen-Macaulay by Theorem~\ref{thm:reclinsp}.\ref{CM}, it suffices to show that the intersection $\overline{\cL^{-1}} \cap V(\overline f_1, \ldots, \overline f_r)$ is 0-dimensional, as every system of parameters in a standard graded Cohen-Macaulay ring is a regular sequence \cite[Theorem 2.1.2]{BH93}.

At the hyperplane at infinity, the intersection $V(x_0) \cap \overline{\cL^{-1}} \cap V(\overline f_1, \ldots, \overline f_r)$ is isomorphic to $\PP\cL^{-1} \cap \PP Y_\infty$, which is empty for a generic $s\in \CC^n$ by Lemma~\ref{lem:intersectionAtInfinity}.  On the complement of the hyperplane at infinity, the intersection is equal to $\cL^{-1} \cap Y$, since the dehomogenizations of the polynomials $\overline f_1, \ldots, \overline f_r$ give the defining equations of $Y$.  From Theorem~\ref{thm:reclinsp}.\ref{strata} and the definition of $Y(\cL,s,d)$, it follows that
\[
\cL^{-1}\cap Y \cap ((\CC^*)^F \times \{0\}^{[n]\setminus F}) = (\cL_{|F}^{-1} \cap Y(\cL_{|F},s_{|F},d) \cap (\CC^*)^F ) \times \{0\}^{[n]\setminus F}
\]
if $F\subseteq [n]$ is a flat or empty otherwise, and thus Lemma~\ref{lem:final} applied to each flat $F$ implies that $\cL^{-1} \cap Y$ is 0-dimensional.  Thus, the degree $d$ polynomials $(\overline f_1, \ldots, \overline f_r)$ form a regular sequence on $\overline {\cL^{-1}}$, and hence the degree of $\overline{\cL^{-1}} \cap V(\overline f_1, \ldots, \overline f_r)$ is $d^r \deg (\overline{\cL^{-1}})$.
As the ideal of $\cL^{-1}$ homogeneous, the degrees of $\PP\cL^{-1}$ and $\overline{\cL^{-1}}$ are equal, with the value being $|\mu(M)|$ by Theorem~\ref{thm:reclinsp}.\ref{deg}.  Lastly, since $\overline{\cL^{-1}} \cap V(\overline f_1, \ldots, \overline f_r)$ is empty at the hyperplane at infinity, the degree of the intersection is equal to the degree of $\cL^{-1} \cap Y(\cL,s,d)$.
\end{proof}

We now compute the degree of $\cL^{-1} \cap Y(\cL,s,d)$ as the sum of contributions from the various strata of $\cL^{-1}$.  First, we need the following notation.  Note that a 0-dimensional subscheme $X \subset \CC^{[n]}$ is a union $\bigcup_{\alpha} X_\alpha$ of irreducible (possibly non-reduced) components $X_\alpha$, each of which is topologically a point $(X_\alpha)_{red}$ in $\CC^{[n]}$.  For a subset $I\subseteq [n]$, we write $X^F$ to be the subscheme of $X$ defined as the union of components of $X$ whose support is $F$, i.e.\
\[
X^F := \bigcup \{X_\alpha \mid (X_\alpha)_{red} \in (\CC^*)^F \times \{0\}^{[n]\setminus F}\}.
\]
Moreover, recall the notation that $\mathcal D(\cL,d)$ denotes the degree of the subscheme $\cL^{-1} \cap Y(\cL,s,d) \cap (\CC^*)^{[n]} \subset \CC^{[n]}$ for a generic choice of $s\in \CC^n$.

\begin{prop}\label{prop:eachstrata}
For a generic choice of $s\in \CC^n$, and for a flat $F\subseteq [n]$ of $M$, the degree of $(\cL^{-1} \cap Y(\cL,s,d))^F \subset \CC^{[n]}$ is equal to $\mathcal D(\cL_{|F}, d) \cdot |\mu(M/F)|$.
\end{prop}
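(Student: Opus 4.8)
The plan is to pin down the finitely many reduced points underlying the scheme $(\cL^{-1}\cap Y)^F$, where $Y:=Y(\cL,s,d)$, and then to show that $\cL^{-1}$ and $Y$ meet with local multiplicity exactly $|\mu(M/F)|$ at each of them. The displayed identity in the proof of Proposition~\ref{prop:total} --- obtained by applying Theorem~\ref{thm:reclinsp}.\ref{strata} to the flat $F$ --- shows that $(\cL^{-1}\cap Y)^F$ is supported on $(\cL_{|F}^{-1}\cap Y(\cL_{|F},s_{|F},d)\cap(\CC^*)^F)\times\{\mathbf 0\}^{[n]\setminus F}$. Since a generic $s\in\CC^n$ restricts to a generic $s_{|F}\in\CC^F$, Lemma~\ref{lem:final} applied to $\cL_{|F}$ makes this support a reduced set of exactly $\mathcal D(\cL_{|F},d)$ points. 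Hence $\deg((\cL^{-1}\cap Y)^F)$ is the sum, over these points $p$, of the local intersection multiplicities $\dim_{\CC}\mathcal O_{\cL^{-1}\cap Y,\,p}$, and it remains to prove that each of these equals $|\mu(M/F)|$.

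First I would compute the multiplicity of $\cL^{-1}$ alone at such a $p$. By Theorem~\ref{thm:reclinsp}.\ref{TC}, the tangent cone $TC_p\cL^{-1}$ is the product of the linear subspace $TC_{p_{|F}}\cL_{|F}^{-1}=p_{|F}^2\cL_{|F}\subseteq\CC^F$ with the cone $\cL_{/F}^{-1}\subseteq\CC^{[n]\setminus F}$; since $F$ is a flat of the loopless $M$, the contraction $M/F$ is loopless, so Theorem~\ref{thm:reclinsp}.\ref{deg} gives $\deg\PP\cL_{/F}^{-1}=|\mu(M/F)|$. As the two factors occupy complementary coordinate subspaces, $\PP(TC_p\cL^{-1})$ is the projective join of a linear space (degree $1$) with $\PP\cL_{/F}^{-1}$, and the degree of such a join is the product of the degrees; hence $\operatorname{mult}_p(\cL^{-1})=\deg\PP(TC_p\cL^{-1})=|\mu(M/F)|$.

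The crux is to show this multiplicity is not raised by cutting with $Y$. As $Y$ is smooth at $p$ of codimension $r=\dim\cL^{-1}$ (Lemma~\ref{lem:scoresmooth}) and $\mathcal O_{\cL^{-1},p}$ is Cohen--Macaulay (Theorem~\ref{thm:reclinsp}.\ref{CM}), one has $\dim_{\CC}\mathcal O_{\cL^{-1}\cap Y,p}\ge\operatorname{mult}_p(\cL^{-1})$, with equality precisely when the linear leading forms of the local equations of $Y$ at $p$ cut out, inside $TC_p\cL^{-1}$, only the vertex --- that is, when $(T_pY-p)\cap TC_p\cL^{-1}=\{\mathbf 0\}$. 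I would establish this transversality for generic $s$. Using $p\in Y$ one finds $T_pY-p=\ker(A\cdot\operatorname{diag}(ds_ip_i^{d-1}-1))$, and after passing to the coordinates $\xi_i:=s_ix_i^d-x_i$ --- a local coordinate system near $p$ for generic $s$, in which $Y$ becomes the linear space $\{\xi\in\cL^\perp\}$ --- the condition turns into $\cL^\perp\cap(\operatorname{diag}(\eta_ip_i^2)\,\cL_{|F}\times\cL_{/F}^{-1})=\{\mathbf 0\}$ inside $\CC^F\times\CC^{[n]\setminus F}$, with $\eta_i=ds_ip_i^{d-1}-1$. The projection of $\cL^\perp$ to $\CC^{[n]\setminus F}$ lands in $(\cL_{/F})^\perp$, while $\cL_{/F}^{-1}\cap(\cL_{/F})^\perp=\{\mathbf 0\}$ by Theorem~\ref{thm:reclinsp}.\ref{CM} applied to $\cL_{/F}$, so the $\CC^{[n]\setminus F}$-component of any element of the intersection vanishes; as $\cL^\perp\cap\CC^F=(\cL_{|F})^\perp$, the condition reduces to the purely $F$-local statement that $\operatorname{diag}(\eta_ip_i^2:i\in F)\,\cL_{|F}$ is complementary to $(\cL_{|F})^\perp$ inside $\CC^F$. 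By the Cauchy--Binet formula this is the nonvanishing, at $\theta_i=(ds_ip_i^{d-1}-1)p_i^2$, of the polynomial $\det(A_F\operatorname{diag}(\theta)A_F^{T})=\sum_B c_B^2\prod_{i\in B}\theta_i$, summed over bases $B$ of $M|F$ with $c_B\ne 0$ the corresponding maximal minors of the submatrix $A_F$ of $A$ on the columns in $F$. (For $d=1$, where $Y$ is itself linear, the same scheme of argument applies with minor changes.)

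The main obstacle is exactly this last nonvanishing: the point $p$ varies with $s$, so one cannot simply fix $p$ and perturb $s$, but must argue over the whole parameter space. Concretely, I would show that the incidence locus
\[
\{(x,s)\in\CC^{[n]}\times\CC^n:\ x\in\cL^{-1},\ \supp(x)=F,\ x\in Y(\cL,s,d),\ \det(A_F\operatorname{diag}(\theta^{(x,s)})A_F^{T})=0\}
\]
has dimension $<n$ and hence does not dominate $\CC^n$ --- a dimension count in the spirit of Lemmas~\ref{lem:intersectionAtInfinity} and~\ref{lem:final}, tracking when a diagonal rescaling makes the standard bilinear form degenerate on $\cL_{|F}$. (An alternative route would invoke the local product structure of $\cL^{-1}$ along its strata --- that near such a $p$ it is, analytically, $(\cL_{|F}^{-1},p_{|F})\times(\cL_{/F}^{-1},\mathbf 0)$, which can be read off the Gr\"obner basis of \cite[Theorem 4]{PS06} --- to see directly that $\cL^{-1}\cap Y$ is locally $(\cL_{|F}^{-1}\cap Y(\cL_{|F},s_{|F},d))\times(\cL_{/F}^{-1}\cap L)$ for a linear slice $L$ generic for generic $s$, of degree $1\cdot|\mu(M/F)|$.) Granting the transversality, summing $\dim_{\CC}\mathcal O_{\cL^{-1}\cap Y,p}=|\mu(M/F)|$ over the $\mathcal D(\cL_{|F},d)$ points $p$ yields $\deg((\cL^{-1}\cap Y)^F)=\mathcal D(\cL_{|F},d)\cdot|\mu(M/F)|$, as desired.
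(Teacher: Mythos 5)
Your overall architecture is the same as the paper's: identify the support of $(\cL^{-1}\cap Y)^F$ with the $\mathcal D(\cL_{|F},d)$ reduced points coming from Theorem~\ref{thm:reclinsp}.\ref{strata} and Lemma~\ref{lem:final}, compute the local multiplicity as $\deg \PP TC_p\cL^{-1}=|\mu(M/F)|$ via Theorem~\ref{thm:reclinsp}.\ref{TC} and \ref{thm:reclinsp}.\ref{deg}, reduce the needed equality of lengths to the transversality $(T_pY-p)\cap TC_p\cL^{-1}=\{\mathbf 0\}$ (the paper packages this as the criterion of Eisenbud--Harris for Cohen--Macaulay intersections), and split that condition along $\CC^F\times\CC^{[n]\setminus F}$, killing the $[n]\setminus F$ component by $\cL_{/F}^{-1}\cap\cL_{/F}^{\perp}=\{\mathbf 0\}$ from Theorem~\ref{thm:reclinsp}.\ref{CM}. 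Up to this point your argument is essentially the paper's.

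The genuine gap is the remaining $F$-local statement, which you explicitly leave open: you recast it as nonvanishing of the Cauchy--Binet polynomial $\sum_B c_B^2\prod_{i\in B}\theta_i$ at $\theta_i=(ds_ip_i^{d-1}-1)p_i^2$ and propose an incidence-variety dimension count ``in the spirit of Lemmas~\ref{lem:intersectionAtInfinity} and~\ref{lem:final}'', but you do not carry it out, and as sketched it does not close: the fibre of your incidence locus over a point $p$ with $\supp(p)=F$ is cut out inside the $(n-\rk F)$-dimensional affine space $\{s: p\in Y(\cL,s,d)\}$, and to get dimension $<n$ you must show the determinant does not vanish identically on these fibres --- which is exactly the nontrivial transversality you are trying to prove, so nothing has been gained. (There is also a smaller slip: replacing $\ker\big(A\operatorname{diag}(\eta_i)\big)\cap TC_p\cL^{-1}=\{\mathbf 0\}$ by $\cL^\perp\cap\big(\operatorname{diag}(\eta_ip_i^2)\cL_{|F}\times\cL_{/F}^{-1}\big)=\{\mathbf 0\}$ presumes $\operatorname{diag}(\eta)$ is injective on the tangent cone, i.e.\ $\eta_i\neq 0$ for $i\in F$, which is not free; and for a flat with $\rk F<r$ the matrix $A_F$ must be replaced by one of full row rank before taking the determinant.) The paper closes this step with no new genericity argument: the $F$-local condition is precisely the statement that $T_{p_{|F}}\cL_{|F}^{-1}$ and $T_{p_{|F}}Y(\cL_{|F},s_{|F},d)$ meet only at $p_{|F}$, and this follows from Lemma~\ref{lem:final} applied to $\cL_{|F}$, since a $0$-dimensional \emph{smooth} (hence reduced) intersection of two varieties of complementary dimension that are themselves smooth at the point forces the tangent spaces to intersect trivially. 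You already invoked Lemma~\ref{lem:final} for $\cL_{|F}$ to get the reduced support, so redeploying it here would complete your proof in one line; without that (or a genuinely executed substitute for the nonvanishing), the proposal is incomplete at its crux.
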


\begin{proof}
As topological spaces, the subscheme $(\cL^{-1} \cap Y)^F$ is equal to the intersection $\cL^{-1} \cap Y \cap ((\CC^*)^F \times \{0\}^{[n]\setminus F})$, which is by Theorem~\ref{thm:reclinsp}.\ref{strata} isomorphic to $\cL_{|F}^{-1} \cap Y(\cL_{|F}, s_{|F}, d) \cap (\CC^*)^F$, which as a scheme is a disjoint union of $\mathcal D(\cL_{|F},d)$ many smooth points by Lemma~\ref{lem:final}.  It remains only to show that if $\widetilde p$ is an irreducible component in $(\cL^{-1} \cap Y)^F$, then the degree of $\widetilde p$ is $|\mu(M/F)|$.

For this end, we recall \cite[Theorem 1.26 \& Proposition 1.29]{EH16}:  Suppose two Cohen-Macaulay subvarieties $X$ and $X'$ of complementary dimensions in a smooth variety $Z$ intersect dimensionally properly.  Then, the degree of the intersection $X\cap X'$ at a point $q$ in the intersection is equal to the product of the degrees of projectivized tangent cones $\PP TC_qX$ and $\PP TC_qX'$, provided that $\PP TC_qX$ and $\PP TC_qX'$ are disjoint in $\PP T_qZ$.

We apply this to $\cL^{-1}$ and $Y$, which are Cohen-Macaulay respectively by Theorem~\ref{thm:reclinsp}.\ref{CM} and Lemma~\ref{lem:scoresmooth}, and they intersect dimensionally properly by Proposition~\ref{prop:total}.  Topologically $\widetilde p$ is a point $p\in (\CC^*)^F \times \{0\}^{[n]\setminus F}$.  Combining Theorem~\ref{thm:reclinsp}.\ref{TC} with Theorem~\ref{thm:reclinsp}.\ref{deg}, one has that the degree of $\PP TC_p\cL^{-1}$ is equal to $|\mu(M/F)|$.  The degree of $\PP TC_pY$ is 1 since $Y$ is smooth (Lemma~\ref{lem:scoresmooth}).  Thus, we are done once we show that $\PP TC_p\cL^{-1}$ and $\PP TC_pY$ are disjoint.
This is done in the following lemma.
\end{proof}

\begin{lemma}
Let $p$ be a point in $\cL^{-1} \cap Y \cap ((\CC^*)^F\times \{0\}^{[n]\setminus F})$ for a generic choice of $s\in \CC^n$, and for a flat $F\subseteq [n]$.  Then we have $\PP TC_p\cL^{-1} \cap \PP TC_pY = \emptyset$.
\end{lemma}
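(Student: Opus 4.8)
The plan is to reduce to the case where $p$ has full support $F=[n]$, in which the statement follows almost formally from Lemma~\ref{lem:final}, and to dispatch the (essentially trivial) case $d=1$ separately. Since $TC_p\cL^{-1}$ and $TC_pY$ are cones, $\PP TC_p\cL^{-1}\cap\PP TC_pY=\emptyset$ is equivalent to $TC_p\cL^{-1}\cap TC_pY=\{\mathbf 0\}$. As $Y$ is smooth (Lemma~\ref{lem:scoresmooth}), $TC_pY=T_pY$, and from the Jacobian~\eqref{eq:jacobian} together with $\ker A=\cL^\perp$ one gets $T_pY=\{v\in\CC^{[n]}\mid Dv\in\cL^\perp\}$, where $D:=\operatorname{diag}(ds_1p_1^{d-1}-1,\ldots,ds_np_n^{d-1}-1)$. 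By Theorem~\ref{thm:reclinsp}.\ref{TC} and the identity $TC_{p_{|F}}\cL_{|F}^{-1}=p_{|F}^2\cL_{|F}$ recorded after that theorem, $TC_p\cL^{-1}=(p_{|F}^2\cL_{|F})\times\cL_{/F}^{-1}$ inside $\CC^F\times\CC^{[n]\setminus F}=\CC^{[n]}$. So it suffices to show that any $u+w$ with $u\in p_{|F}^2\cL_{|F}\subseteq\CC^F$, $w\in\cL_{/F}^{-1}\subseteq\CC^{[n]\setminus F}$, and $Du+Dw\in\cL^\perp$ is zero. We choose $s$ generic enough that Lemmas~\ref{lem:scoresmooth} and~\ref{lem:final} apply to each of the finitely many restrictions $\cL_{|F}$ with parameter $s_{|F}$ (and to $\cL$ itself).

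When $d=1$, $Y=\operatorname{diag}(s_i-1)^{-1}\cL^\perp$ is linear, and I claim $\cL^{-1}\cap Y=\{\mathbf 0\}$ as a set for generic $s$: for an invertible diagonal matrix $E$, the condition $\cL^{-1}\cap E^{-1}\cL^\perp=\{\mathbf 0\}$ is equivalent to $\PP\cL^{-1}\cap\PP(E^{-1}\cL^\perp)=\emptyset$, which is Zariski-open in $E$ (the bad locus is the image of the closed incidence $\{(E,[y])\mid[y]\in\PP\cL^{-1},\ Ey\in\cL^\perp\}$ under the projection to $E$, a closed map since $\PP\cL^{-1}$ is complete) and holds at $E=\operatorname{Id}$ by Theorem~\ref{thm:reclinsp}.\ref{CM}. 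Thus for generic $s$ the only $p$ arising in the lemma is $\mathbf 0$, where $TC_{\mathbf 0}\cL^{-1}=\cL^{-1}$ (the ideal being homogeneous) and $TC_{\mathbf 0}Y=Y$ meet only in $\mathbf 0$. From now on assume $d\geq 2$.

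For $i\notin F$ we then have $p_i=0$, so $D$ restricts to $-\operatorname{Id}$ on the coordinates $[n]\setminus F$; write $D_F$ for its restriction to the coordinates $F$, so that $Du+Dw=D_Fu-w$. Pairing $D_Fu-w\in\cL^\perp$ against $v\in\cL$ with $\operatorname{supp}(v)\subseteq[n]\setminus F$ (equivalently $v\in\cL_{/F}$) yields $\langle v,w\rangle=0$; letting $v$ range over $\cL_{/F}$ gives $w\in(\cL_{/F})^\perp$, hence $w\in\cL_{/F}^{-1}\cap(\cL_{/F})^\perp=\{\mathbf 0\}$ by Theorem~\ref{thm:reclinsp}.\ref{CM} applied to $\cL_{/F}$ (loopless since $F$ is a flat). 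With $w=\mathbf 0$, we get $D_Fu\in\cL^\perp$, and pairing against $\cL$ and restricting to $F$ gives $D_Fu\in(\cL_{|F})^\perp$. So $u$ lies both in $p_{|F}^2\cL_{|F}=TC_{p_{|F}}\cL_{|F}^{-1}$ (as $p_{|F}\in(\CC^*)^F$) and in $\{y\in\CC^F\mid D_Fy\in(\cL_{|F})^\perp\}=T_{p_{|F}}Y(\cL_{|F},s_{|F},d)$, the latter being the kernel of the Jacobian of $Y(\cL_{|F},s_{|F},d)$ at $p_{|F}$ computed as in \eqref{eq:jacobian}. Since $p_{|F}\in\cL_{|F}^{-1}\cap Y(\cL_{|F},s_{|F},d)\cap(\CC^*)^F$ (Theorem~\ref{thm:reclinsp}.\ref{strata} and $p\in Y$), the full-support argument now applies: $\cL_{|F}^{-1}$ is smooth at the torus point $p_{|F}$ so $TC_{p_{|F}}\cL_{|F}^{-1}=T_{p_{|F}}\cL_{|F}^{-1}$, and for the scheme-theoretic intersection $Z:=\cL_{|F}^{-1}\cap Y(\cL_{|F},s_{|F},d)$ one has $T_{p_{|F}}Z=T_{p_{|F}}\cL_{|F}^{-1}\cap T_{p_{|F}}Y(\cL_{|F},s_{|F},d)$ (the ideal of $Z$ being the sum of the two ideals, the common kernel of their differentials at $p_{|F}$ is the intersection of the tangent spaces); by Lemma~\ref{lem:final} applied to $\cL_{|F}$, $Z\cap(\CC^*)^F$ is smooth of dimension $0$, hence reduced at $p_{|F}$, so $T_{p_{|F}}Z=\{\mathbf 0\}$. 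Therefore $u=\mathbf 0$, and $u+w=\mathbf 0$.

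The one genuine idea is that Lemma~\ref{lem:final} already supplies the needed transversality along the open torus: for a point of full support the assertion is literally ``$T_p(\cL^{-1}\cap Y)=\{\mathbf 0\}$'', and for a general flat $F$ the product decomposition of $TC_p\cL^{-1}$ in Theorem~\ref{thm:reclinsp}.\ref{TC} peels off the ``reciprocal'' factor $\cL_{/F}^{-1}$ (killed by Theorem~\ref{thm:reclinsp}.\ref{CM}) and reduces the ``linear'' factor $p_{|F}^2\cL_{|F}$ to the full-support case for $\cL_{|F}$. The steps I expect to require the most care are the coordinate bookkeeping in $\CC^{[n]}=\CC^F\times\CC^{[n]\setminus F}$ when pairing against $\cL^\perp$, and the fact that for $d=1$ the matrix $D$ is no longer $-\operatorname{Id}$ off $F$, which is exactly why that case is peeled off first (and then becomes immediate).
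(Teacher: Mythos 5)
Your proof is correct and takes essentially the same route as the paper's: identify $TC_pY$ with the kernel of the Jacobian $A\cdot\operatorname{diag}(ds_ip_i^{d-1}-1)$, split $TC_p\cL^{-1}$ via Theorem~\ref{thm:reclinsp}.\ref{TC}, kill the $\cL_{/F}^{-1}$-factor using $\cL_{/F}^{-1}\cap\cL_{/F}^{\perp}=\{\mathbf 0\}$ (Theorem~\ref{thm:reclinsp}.\ref{CM}), and conclude from the transversality at $p_{|F}$ furnished by Lemma~\ref{lem:final}. The only difference is your separate (and correct) treatment of $d=1$, where the off-$F$ diagonal entries are $s_i-1$ rather than $-1$; the paper's argument is written with $d\geq 2$ in mind and passes over this case silently.
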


\begin{proof}
Let $A$ be an $r\times n$ matrix whose row-span is $\cL$.  As computed in the proof of Lemma~\ref{lem:scoresmooth} in Equation~\eqref{eq:jacobian}, the tangent cone $TC_pY$ is equal to
\[
\ker \big(A \cdot \operatorname{diag}(ds_1p_1^{d-1}-1, \ldots, ds_np_n^{d-1}-1)\big) \subset \CC^{[n]}.
\]
Since $p_i = 0$ for $i\in [n]\setminus F$, and since the cocircuits of the matroid $M/F$ are cocircuits of $M$ contained in $[n]\setminus F$, if $x\in TC_pY$ then $x_{|[n]\setminus F} \in \cL_{/F}^\perp \subset \CC^{[n]\setminus F}$.
On the other hand, by Theorem~\ref{thm:reclinsp}.\ref{TC} we have
$
TC_p\cL^{-1} = TC_{p_{|F}}\cL_{|F}^{-1} \times \cL_{/F}^{-1}.
$
Let us now consider $x \in TC_p\cL^{-1}\cap TC_pY$.   We have $x_{|[n]\setminus F} \in \cL_{/F}^\perp \cap \cL_{/F}^{-1} = \{\mathbf 0\}$, where the equality follows from Theorem~\ref{thm:reclinsp}.\ref{CM}, and thus $x = x_{|F}\times \mathbf 0$.  That $x\in TC_pY$ now implies that $x_{|F} \in TC_{p_{|F}}Y(\cL_{|F},s_{|F},d)$.  Thus, we conclude $x_{|F}=0$, since by Lemma~\ref{lem:final} the intersection $\cL_{|F}^{-1} \cap Y(\cL_{|F},s_{|F},d)$ is smooth, and in particular transversal at $p_{|F}$, i.e.\ $TC_{p_{|F}}\cL_{|F}^{-1} \cap TC_{p_{|F}}Y(\cL_{|F},s_{|F},d) = \{\mathbf 0\}$.
\end{proof}

Combining Propositions \ref{prop:total} and \ref{prop:eachstrata} yields the following.

\begin{cor}\label{cor:twocounts}
For a generic choice of $s\in \CC^n$, we have
\[
d^r|\mu(M)| = \deg\big( \cL^{-1} \cap Y(\cL,s,d)\big) = \sum_{\substack{F\subseteq[n]\\ \text{a flat of $M$}}} \mathcal D(\cL_{|F},d) |\mu(M/F)|.
\]
\end{cor}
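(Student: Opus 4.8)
The plan is to read off both equalities directly from the two preceding propositions; the only real content is the bookkeeping of how a zero-dimensional scheme decomposes into support strata.

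The left-hand equality is immediate: Proposition~\ref{prop:total} asserts that for generic $s\in\CC^n$ the scheme $\cL^{-1}\cap Y(\cL,s,d)$ is zero-dimensional of degree $d^r|\mu(M)|$.

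For the right-hand equality, I would use that $\cL^{-1}\cap Y(\cL,s,d)$, being zero-dimensional, is a finite disjoint union $\bigcup_\alpha X_\alpha$ of irreducible (possibly non-reduced) components, so its degree is $\sum_\alpha \deg X_\alpha$. Each $X_\alpha$ is topologically a single point lying in exactly one stratum $(\CC^*)^F\times\{0\}^{[n]\setminus F}$ of $\CC^{[n]}$, so grouping components by support gives
\[
\deg\big(\cL^{-1}\cap Y(\cL,s,d)\big) = \sum_{F} \deg\big((\cL^{-1}\cap Y(\cL,s,d))^F\big)
\]
over all $F\subseteq[n]$ that occur as a support. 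By Theorem~\ref{thm:reclinsp}.\ref{strata}, such $F$ are precisely the flats of $M$ (here one uses that $M$ is loopless, so that $F=\emptyset$, corresponding to a possible component at the origin, is a flat and is included). Finally, Proposition~\ref{prop:eachstrata} identifies each summand $\deg\big((\cL^{-1}\cap Y(\cL,s,d))^F\big)$ with $\mathcal D(\cL_{|F},d)\,|\mu(M/F)|$, and substituting yields the displayed formula. As a sanity check, the term $F=[n]$ contributes $\mathcal D(\cL,d)$ (since $M/[n]$ is the empty matroid with $|\mu|=1$) and the term $F=\emptyset$ contributes $|\mu(M)|$ (since $\mathcal D(\cL_{|\emptyset},d)=1$), matching the two \emph{extreme} strata of $\cL^{-1}$.

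I do not expect a genuine obstacle here: the substantive input (genericity of $s$, Cohen--Macaulayness, the tangent-cone degree computation) was already expended in Propositions~\ref{prop:total} and~\ref{prop:eachstrata}. The one point that merits a moment's care is verifying that the stratification of the zero-dimensional scheme by support is simultaneously exhaustive and disjoint, and in particular that no component has been overlooked at the origin — which is exactly why the flat $\emptyset$ must appear in the index set of the sum.
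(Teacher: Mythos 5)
Your proposal is correct and matches the paper's (implicit) argument exactly: the paper derives the corollary by combining Proposition~\ref{prop:total} for the total degree with Proposition~\ref{prop:eachstrata} for the stratum-by-stratum degrees, using Theorem~\ref{thm:reclinsp}.\ref{strata} to know that the supports of components are precisely the flats of $M$, just as you do. Your extra check of the $F=\emptyset$ and $F=[n]$ terms is a harmless (and correct) addition.
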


\section{Inclusion-exclusion}\label{sec:comb}

We now finish the proof of the main theorem by combining Corollary~\ref{cor:twocounts} with an inclusion-exclusion argument.  For the facts regarding the lattice of flats of a matroid and the M\"obius invariant used here, see \cite{Zas87}.

\begin{proof}[Proof of Theorem~\ref{thm:main2}]
Write $\operatorname{rk}: 2^{[n]} \to \ZZ$ for the rank function of the matroid $M$ associated to $\cL$.  Let us recall that for a matroid $M'$ of rank $r'$, one has $(-1)^{r'} \mu(M') = |\mu(M')|$.
Then, Corollary~\ref{cor:twocounts} states that
\[
(-d)^{\operatorname{rk} [n]} \mu(M) = \sum_{\substack{F\subseteq[n]\\ \text{a flat of $M$}}} (-1)^{\operatorname{rk} [n] - \operatorname{rk} F}\mathcal D(\cL_{|F},d) \mu(M/F),
\]
or more generally, one has, for any flat $F\subseteq [n]$ of $M$,
\[
(-d)^{\operatorname{rk}F} \mu(M|F) = \sum_{\substack{F'\subseteq F\\ \text{a flat of $M$}}} (-1)^{\operatorname{rk} F - \operatorname{rk} F'}\mathcal D(\cL_{|F'},d) \mu(M|F/F').
\]
As $\mu(M|F/F')$ is the value of the M\"obius function $\mu(F',F)$ on the lattice of flats of $M$, applying the M\"obius inversion formula \cite[Proposition 3.7.1]{Sta12} (with $f(F)=d^{\rk F}\mu({M}|{F})$ and $g(F)=(-1)^{\rk F}\mathcal D(\cL_{|F},d)$) yields 
\[
(-1)^{\rk F}\mathcal D(\cL_{|F},d) = \sum_{\substack{F'\subseteq F\\ \text{a flat of $M$}}}d^{\rk F'}\mu({M}|{F'}).
\]
Now, letting $F = [n]$ and noting $\operatorname{rk} [n] = r$, we have
\[
(-1)^{r} \mathcal D(\cL,d) = d^{r} \sum_{\substack{F'\subseteq [n]\\ \text{a flat of $M$}}} ( \textstyle{\frac{1}{d}})^{r - \rk F'}\mu(M|F') = 
d^r \chi_M(\frac{1}{d}),
\]
so that $\mathcal D(\mathcal L,d) = (-d)^r\chi_M(\frac{1}{d}) = d^rT_M(1-\frac{1}{d},0)$ as desired.
\end{proof}

\begin{rem}\label{rem:MLdeg}
For the non-reciprocal ML degree $\mld(\cL)$, a standard computation similar to the one that gives Proposition~\ref{prop:scoreprep} (see \cite{SU10, STZ20}) yields
\[
\mld(\cL) = \deg \big(\cL^{-1} \cap Y(\cL,s,d=0) \cap (\CC^*)^{[n]} \big) = \mathcal D(\cL, d=0).
\]
One can hence recover \cite[Corollary 3]{SU10}, which states $\mld(\cL) = |\mu(M)|$, by minor modifications of our arguments here.  This case is in fact simpler, with no need for the consideration of tangent cones as was done in Proposition~\ref{prop:eachstrata}, because the intersection $\cL^{-1} \cap Y(\cL,s,d)$ lies entirely in $(\CC^*)^{[n]}$ when $d=0$.  We emphasize that $\cL^{-1} \cap Y(\cL,s,d)$ \emph{never} lies entirely in $(\CC^*)^{[n]}$ when $d\geq 1$.
\end{rem}

\begin{rem}
Combining Theorem~\ref{thm:main2} with the ``recipe formula'' for Tutte-Grothendieck invariants of matroids (see for instance \cite[Theorem 2.16]{Wel99}), one has that $\mathcal D(\cL,d)$ for $d\geq 1$ satisfies the following deletion-contraction relation given an element $e\in [n]$:
\[
\mathcal D(\cL,d) = 
\begin{cases}
0 &\text{if $e$ is a loop,}\\
(d-1) \cdot \mathcal D(\cL_{/e},d) & \text{if $e$ is a coloop,}\\
\mathcal D(\cL_{\setminus e},d) + d\cdot \mathcal D(\cL_{/e},d) & \text{if $e$ neither loop nor coloop,}
\end{cases}
\]
where the base cases are $\mathcal D(\cL= \CC^{[1]},d) = 1$ and $\mathcal D(\cL = \{0\} \subset \CC^{[1]},d) = 0$.
From this deletion-contraction relation, one can verify that the following statements are equivalent:
\begin{enumerate}[label = (\roman*)]
\item $\mathcal D(\cL,2) = \rmld(\cL) = 1$,
\item $M$ is a partition matroid (i.e.\ every component of $M$ has rank 1),
\item $\mld(\cL) = |\mu(M)| = 1$, and
\item $\cL^{-1}$ is linear.
\end{enumerate}
It may be interesting to find a proof of Theorem~\ref{thm:main2} that directly reflects the deletion-contraction relation above geometrically.
\end{rem}

\begin{proof}[Proof of Corollary~\ref{cor:uniform}]
For an $r$-dimensional general subspace $\cL\subseteq \CC^{[n]}$, the associated matroid is the uniform matroid $U_{r,n}$, for which the Tutte polynomial has the following formula (see for instance \cite{MR})
\[
T_{U_{r,n}}(x,y) = \sum_{i=1}^{r}\textstyle \binom{n-i-1}{r-i} x^i + \displaystyle \sum_{j=1}^{n-r}  \textstyle \binom{n-j-1}{r-1} y^j.
\]
Thus, for a general $\cL$, Theorem~\ref{thm:main2} implies that
\[
D(\cL,d) = d^r T_{U_{r,n}}\left( \textstyle{1-\frac{1}{d}},0\right) =   \displaystyle d^r\sum_{1=1}^{r}\textstyle \binom{n-i-1}{r-i}\left(1-\frac{1}{d} \right)^{i}.
\]
Evaluating at $d=2$ gives us
\[
 \rmld(\cL) = 2^r\sum_{i=1}^{r}  \textstyle \binom{n-i-1} {r-i}\left(\frac{1}{2} \right)^{i} \displaystyle  = \sum_{i=1}^{r}  \textstyle \binom{n-i-1}{r-i}2^{r-i},
\]
as desired.
\end{proof}

\begin{rem}
It is interesting to note that, for diagonal linear concentration models, our formula Theorem~\ref{thm:main1} implies that $\rmld(\cL)$ is always odd unless it is zero.  The same pattern seems to persist for reciprocal ML degrees of general linear concentration models \cite[Table 1]{STZ20}.
\end{rem}

\subsection*{Acknowledgements}
We thank Yairon Cid Ruiz and Bernd Sturmfels for helpful conversations, and we thank the referee for simplifying the proof of Lemma 3.1. We also thank the organizers of the Linear Spaces of Symmetric Matrices working group at MPI MiS Leipzig.  C.E. is partially supported by the US National Science Foundation
(DMS-2001854).
 
\bibliography{paper}

\begin{thebibliography}{10}

\bibitem{BCEMR20}
Tobias Boege, Jane Ivy~Coons, Christopher Eur, Aida Maraj, and Frank
  R{\"o}ttger.
\newblock Reciprocal maximum likelihood degrees of brownian motion tree models.
\newblock {\em arXiv:2009.04334}, 2020.

\bibitem{BH93}
Winfried Bruns and J\"{u}rgen Herzog.
\newblock {\em Cohen-{M}acaulay rings}, volume~39 of {\em Cambridge Studies in
  Advanced Mathematics}.
\newblock Cambridge University Press, Cambridge, 1993.

\bibitem{CMR20}
Jane~Ivy Coons, Orlando Marigliano, and Michael Ruddy.
\newblock Maximum likelihood degree of the two-dimensional linear gaussian
  covariance model.
\newblock {\em Algebraic Statistics}, 11(2):107--123, 2020.

\bibitem{EH16}
David Eisenbud and Joe Harris.
\newblock {\em 3264 and all that---a second course in algebraic geometry}.
\newblock Cambridge University Press, Cambridge, 2016.

\bibitem{FMS20}
Claudia Fevola, Yelena Mandelshtam, and Bernd Sturmfels.
\newblock Pencils of quadrics: Old and new.
\newblock {\em arXiv:2009.04334}, 2020.

\bibitem{Har77}
Robin Hartshorne.
\newblock {\em Algebraic geometry}.
\newblock Springer-Verlag, New York-Heidelberg, 1977.
\newblock Graduate Texts in Mathematics, No. 52.

\bibitem{Huh13}
June Huh.
\newblock The maximum likelihood degree of a very affine variety.
\newblock {\em Compos. Math.}, 149(8):1245--1266, 2013.

\bibitem{MMMSV20}
Laurent Manivel, Mateusz Micha{\l}ek, Leonid Monin, Tim Seynnaeve, and Martin
  Vodi{\v c}ka.
\newblock Complete quadrics: Schubert calculus for gaussian models and
  semidefinite programming.
\newblock {\em arXiv:2011.08791}, 2020.

\bibitem{MR}
Criel Merino, Marcelino Ram\'{\i}rez-Ib\'{a}\~{n}ez, and Guadalupe
  Rodr\'{\i}guez-S\'{a}nchez.
\newblock The {T}utte polynomial of some matroids.
\newblock {\em Int. J. Comb.}, pages Art. ID 430859, 40, 2012.

\bibitem{MMW20}
Mateusz Micha{\l}ek, Leonid Monin, and Jaros{\l}aw~A. Wi\'{s}niewski.
\newblock Maximum likelihood degree, complete quadrics, and
  {$\mathbb{C}^*$}-action.
\newblock {\em SIAM J. Appl. Algebra Geom.}, 5(1):60--85, 2021.

\bibitem{OS80}
Peter Orlik and Louis Solomon.
\newblock Combinatorics and topology of complements of hyperplanes.
\newblock {\em Invent. Math.}, 56(2):167--189, 1980.

\bibitem{Oxl11}
James Oxley.
\newblock {\em {Matroid theory}}, volume~21 of {\em Oxford Graduate Texts in
  Mathematics}.
\newblock Oxford University Press, Oxford, 2 edition, 2011.

\bibitem{PS06}
Nicholas Proudfoot and David Speyer.
\newblock A broken circuit ring.
\newblock {\em Beitr\"{a}ge Algebra Geom.}, 47(1):161--166, 2006.

\bibitem{SSV13}
Raman Sanyal, Bernd Sturmfels, and Cynthia Vinzant.
\newblock The entropic discriminant.
\newblock {\em Adv. Math.}, 244:678--707, 2013.

\bibitem{Sta12}
Richard~P. Stanley.
\newblock {\em Enumerative combinatorics. {V}olume 1}, volume~49 of {\em
  Cambridge Studies in Advanced Mathematics}.
\newblock Cambridge University Press, Cambridge, second edition, 2012.

\bibitem{STZ20}
Bernd Sturmfels, Sascha Timme, and Piotr Zwiernik.
\newblock Estimating linear covariance models with numerical nonlinear algebra.
\newblock {\em Algebraic Statistics}, 11(1):31--52, 2020.

\bibitem{SU10}
Bernd Sturmfels and Caroline Uhler.
\newblock Multivariate {G}aussian, semidefinite matrix completion, and convex
  algebraic geometry.
\newblock {\em Annals of the Institute of Statistical Mathematics},
  62(4):603--638, 2010.

\bibitem{Ter02}
Hiroaki Terao.
\newblock Algebras generated by reciprocals of linear forms.
\newblock {\em J. Algebra}, 250(2):549--558, 2002.

\bibitem{Wel76}
Dominic Welsh.
\newblock {\em Matroid theory}.
\newblock Academic Press [Harcourt Brace Jovanovich, Publishers], London-New
  York, 1976.
\newblock L. M. S. Monographs, No. 8.

\bibitem{Wel99}
Dominic Welsh.
\newblock The {T}utte polynomial.
\newblock volume~15, pages 210--228. 1999.
\newblock Statistical physics methods in discrete probability, combinatorics,
  and theoretical computer science (Princeton, NJ, 1997).

\bibitem{Zas87}
Thomas Zaslavsky.
\newblock The {M}\"{o}bius function and the characteristic polynomial.
\newblock In {\em Combinatorial geometries}, volume~29 of {\em Encyclopedia
  Math. Appl.}, pages 114--138. Cambridge Univ. Press, Cambridge, 1987.

\end{thebibliography}

\end{document}